\def\pr{\right )}
\def\le{\left (}
\def\d{\,\mathrm{d}}
\def\e{\varepsilon}
\def\R {\mathbb{R}}
\def\t{\tilde}
\newtheorem{proposition}{Proposition}
\newtheorem{theorem}[proposition]{Theorem}
\newtheorem{corollary}[proposition]{Corollary}
\newtheorem{lemma}[proposition]{Lemma}
\theoremstyle{definition}
\newtheorem{remark}[proposition]{Remark}
\numberwithin{equation}{section}
\title[Viscosity solutions with the optimal regularity]{Existence of viscosity solutions with the optimal regularity of a two-peakon Hamilton--Jacobi equation}
\author{Tomasz Cie\'{s}lak and Jakub Siemianowski}
\address{Institute of Mathematics, Polish Academy of Sciences, \'Sniadeckich 8, 00-656 Warszawa, Poland}
\email{cieslak@impan.pl, jsiem@mat.umk.pl}
\begin{document}

\begin{abstract}
This work, which continues the research begun in \cite{CSS}, \cite{CW}, is devoted to the studies of a Hamilton--Jacobi equation with a quadratic and degenerate Hamiltonian, which comes from the dynamics of a multipeakon in the Camassa--Holm equation.
It is given by a quadratic form with a singular positive semi-definite matrix.
We increase the regularity of the value function considered in \cite{CSS}, which is known to be the viscosity solution.
We prove that for a two-peakon Hamiltonian such solutions are actually $1/2$-H\"{o}lder continuous in space and time-Lipschitz continuous. The time-Lipschitz regularity is proven in any dimension $N\geq 1$. Such a regularity is already known in the one-dimensional simplifications (see \cite{CW}), moreover it is the best possible, as shown in \cite{CSS, CW}.

\end{abstract}

\maketitle
\section{Introduction}

We consider the initial-value problem for the Hamilton--Jacobi equation of the form
\begin{equation}\label{eq.uniq:0.1}
\begin{cases}
u_t(x,t) + \frac{1}{2} E(x) \nabla u(x,t) \cdot\nabla u(x,t)  = 0, &x\in \R^N,\; t\in(0,T),\\
u(x,0) = g(x),&x\in \R^N,
\end{cases}
\end{equation}
where $E(x)$ is a symmetric $N\times N$ matrix with entries $E(x)_{i,j}=e^{-|x_i-x_j|}$, $i,j=1,\ldots,N$, $x=(x_1,\ldots,x_N)\in \R^N$.
An initial condition $g:\R\to \R$ is a bounded Lipschitz continuous function. Such a problem arises in the investigation of dynamics of a two-peakon special solutions to the Camassa--Holm equation, see for instance \cite{BSS, CH}.

The multipeakon is a solution of the form $u(x,t)=\sum_{i=1}^N p_i(t)e^{-|x-q_i(t)|}$, which satisfies Camassa--Holm equation provided $p_i, q_i$ are trajectories of the Hamiltonian system with the Hamiltonian given by the quadratic form $H(q,p):=1/2 E(q)p\cdot p$, $p=(p_1, p_2,\ldots, p_N), q=(q_1, q_2,\ldots, q_N)$. The evolution of a multipeakon was an area of many investigations during last years, besides already mentioned \cite{BSS, CH}, see for instance \cite{CGKM, K}, where geometric tools are used to trace the dynamics of multipeakon.
Further information can be found in the introductory part of \cite{CSS}.
The Hamilton--Jacobi equation is a natural description of the front propagation of multipeakon trajectories. This is an original source of our interest in \eqref{eq.uniq:0.1}. Our study continues our previous contribution in \cite{CSS, CW}.

Let us mention that \eqref{eq.uniq:0.1} seems additionally interesting since, due to its degeneracy, it is not covered by the classical theory of unique solvability of the Hamilton--Jacobi equation. Neither classical approach of viscosity solutions (see \cite{CEL, Evans}) nor viability theory methods (see \cite{FPR, galbraith}) seem directly applicable in this case. The matrix $E(x)$, $x=(x_1,x_2,\ldots,x_N)$, becomes singular once $x_i=x_j$ for $i\neq j$ but it is positive semi-definite, see \cite{CGKM} for a proof of this fact (when $x_i\neq x_j$ for $i\neq j$, matrix $E(x)$ is even positive definite).
Due to the symmetry and positive semi-definiteness of $E(x)$, the square root $\sqrt{E(x)}$ of the matrix $E(x)$ is well-defined. Thus, the multipeakon Hamiltonian may be rewritten as $E(x)\nabla u\cdot\nabla u=|\sqrt{E(x)} \nabla u|^2$. In such a formulation, the Hamilton--Jacobi initial-value problems are studied in \cite{CD}, see also the extensions in \cite{CDL}.
Again, approach of \cite{CD, CDL} requires Lipschitz regularity of $\sqrt{E(x)}$ in $x$. This is not the case, since $\sqrt{E(x)}$ is only $1/2$-H\"{o}lder continuous in $x$ as we shall see later on.

We show that the value function which is a visocsity solution to the Hamilton--Jacobi equation with the two-peakon Hamiltonian, constructed in \cite{CSS}, is $1/2$-H\"{o}lder continuous in space and time-Lipschitz continuous. This is the best possible regularity.
We know from \cite{CW} that even in the one-dimensional simplification higher regularity fails, see also \cite[Proposition 5.4]{CSS} for the explicit example. Last but not least, we show that time-Lipschitz continuity of a solution of \eqref{eq.uniq:0.1}, constructed in \cite{CSS}, holds in any dimension $N\geq 1$, see Theorem \ref{thm.reg:2} in Section \ref{regularity}. It is worth mentioning that the optimal regularity obtained in this paper is still not enough to prove the uniqueness of a viscosity solution. Actually, quite a standard use of a doubling variables method, yields global uniqueness provided solutions are $(1/2+\epsilon)$-H\"{o}lder continuous in the space variable. For more information on this issue as well as more sophisticated local-in-time uniqueness, we refer to \cite[Section 5]{CSS}.

Our main theorem reads.
\begin{theorem}\label{glowne}
Let $N=2$ and $g$ be bounded and Lipschitz continuous.
Then a bounded viscosity solution $u$ of \eqref{eq.uniq:0.1}, constructed in \cite{CSS}, admits the following regularity: there is $C>0$ such that for all $(y,t)$, $(\t y, \t t)\in\R^2\times [0,T]$
\[
|u(y,t) - u(\t y, \t t)|\leq C\le |y-\t y|^{1/2} + |t-\t t|\pr.
\]
Moreover, $u$ is differentiable a.e. in $\R^2\times [0,T]$ and satisfies \eqref{eq.uniq:0.1} at all points of differentiability.
\end{theorem}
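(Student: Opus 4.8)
The plan is to \emph{desingularise} \eqref{eq.uniq:0.1} for $N=2$ by an explicit, purely spatial change of variables which absorbs the degeneracy of $E$ into the coordinates, reducing the problem to a \emph{non-degenerate} Hamilton--Jacobi equation with bounded Lipschitz data; the exponent $1/2$ will emerge as the H\"older modulus of this change of variables, and time-Lipschitz regularity will be inherited without loss. Throughout we use that, by \cite{CSS}, $u$ is the value function
\[
u(y,t)=\inf\Big\{\,g(\xi(0))+\int_0^t L(\xi(s),\dot\xi(s))\,\d s\ :\ \xi\in AC([0,t];\R^2),\ \xi(t)=y\,\Big\},
\]
where $L(x,\cdot)$ is the Legendre transform of $p\mapsto\tfrac12 E(x)p\cdot p$; for $N=2$, with $a:=e^{-|x_1-x_2|}$, one computes $L(x,v)=\dfrac{(v_1+v_2)^2}{4(1+a)}+\dfrac{(v_1-v_2)^2}{4(1-a)}$ for $x_1\neq x_2$, while $L(x,\cdot)$ is finite on $\{x_1=x_2\}$ only in the direction $(1,1)$.

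I would introduce $\Phi(x_1,x_2):=\big(x_1+x_2,\ Z(x_1-x_2)\big)=:(\sigma,z)$, where $Z(w):=\operatorname{sgn}(w)\int_0^{|w|}\frac{\d s}{\sqrt{1-e^{-s}}}$, so that $Z'(w)=(1-e^{-|w|})^{-1/2}$. Because $Z'(w)\sim|w|^{-1/2}$ at $w=0$ and this improper integral converges, $\Phi$ is a homeomorphism of $\R^2$ that is $\tfrac12$-H\"older continuous on every set of diameter $\leq1$ (with a uniform constant), is a $C^\infty$ local diffeomorphism off $\{x_1=x_2\}$, and whose inverse $\Psi:=\Phi^{-1}$, $x_{1,2}=\tfrac12(\sigma\pm Z^{-1}(z))$, is globally Lipschitz (indeed $C^1$, as $(Z^{-1})'(z)=\sqrt{1-e^{-|Z^{-1}(z)|}}\in[0,1)$). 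I would then check that $\xi\mapsto\zeta:=\Phi\circ\xi$ is a bijection between finite-cost curves that preserves the cost, the only nontrivial point being that $\zeta_2=Z(\xi_1-\xi_2)$ stays absolutely continuous — which holds since $\int_0^t L(\xi,\dot\xi)\,\d s<\infty$ forces $\dot\zeta_2=(\dot\xi_1-\dot\xi_2)(1-e^{-|\xi_1-\xi_2|})^{-1/2}\in L^2(0,t)$. Hence $u(y,t)=\tilde u(\Phi(y),t)$, where $\tilde u$ is the value function for the Lagrangian $\tilde L(\sigma,z,\dot\sigma,\dot z)=\dfrac{\dot\sigma^2}{4(1+a(z))}+\dfrac{\dot z^2}{4}$, $a(z):=e^{-|Z^{-1}(z)|}$, and the initial datum $\tilde g:=g\circ\Psi$, which is bounded and Lipschitz. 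The point is that $\tilde L$ is non-degenerate: it is uniformly convex and superlinear in $(\dot\sigma,\dot z)$, and $a(\cdot)$ is smooth away from $0$ and (since $|Z^{-1}(z)|=Z^{-1}(|z|)\sim z^2/4$ near $0$) of class $C^{1,1}$ on $\R$, with $1+a(z)\in[1,2]$. Equivalently, $\tilde u$ is the viscosity solution of a Hamilton--Jacobi equation with Hamiltonian $\tilde H(z,p)=(1+a(z))p_\sigma^2+p_z^2$, i.e.\ a uniformly convex, coercive, $p$-smooth Hamiltonian that is Lipschitz in $z$ — precisely the non-degenerate framework of \cite{CEL,Evans,CD}.

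Next I would invoke the classical regularity theory for that framework with bounded Lipschitz data: $\tilde u$ is globally Lipschitz in $(\sigma,z)$ and in $t$, $|\tilde u(\zeta,t)-\tilde u(\zeta',t')|\leq C(|\zeta-\zeta'|+|t-t'|)$. (A self-contained alternative: the a priori bound $\int_0^t\tilde L(\zeta,\dot\zeta)\,\d s\leq Ct$ along near-optimal curves, which follows from $\tilde g$ being bounded and Lipschitz, combined with a competitor-curve perturbation near the terminal time yields the spatial bound; the temporal one then follows from the semigroup property together with $|\tilde u(\zeta,t)-\tilde g(\zeta)|\leq Ct$.) Pulling back, $u=\tilde u\circ\Phi$; using the uniform $\tfrac12$-H\"older continuity of $\Phi$ on sets of diameter $\leq1$ for nearby points, and the boundedness of $u$ for distant ones, we obtain $|u(y,t)-u(\t y,\t t)|\leq C(|y-\t y|^{1/2}+|t-\t t|)$ for all $(y,t),(\t y,\t t)\in\R^2\times[0,T]$.

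Finally, $\tilde u$ being Lipschitz is differentiable a.e.\ by Rademacher's theorem; since $(y,t)\mapsto(\Phi(y),t)$ is a $C^\infty$ diffeomorphism off the Lebesgue-null set $\{y_1=y_2\}\times[0,T]$, the composition $u=\tilde u\circ\Phi$ is differentiable a.e.\ in $\R^2\times[0,T]$. At any point of differentiability the sub- and superdifferentials of $u$ reduce to $Du$, so the viscosity inequalities force $u$ to satisfy \eqref{eq.uniq:0.1} classically there. The step I expect to be the main obstacle is the faithful transfer of the variational problem through $\Phi$ — justifying the absolute continuity of $Z\circ(\xi_1-\xi_2)$ along finite-cost curves and handling correctly the diagonal $\{x_1=x_2\}$, where $L$ is infinite transversally while $\tilde L$ is an ordinary smooth Lagrangian on $\{z=0\}$ — together with pinning down, from \cite{CSS}, that the viscosity solution constructed there indeed coincides with the value function above.
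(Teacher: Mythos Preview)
Your proposal is correct and takes a genuinely different route from the paper. The paper never changes variables; instead it works directly with the control formulation \eqref{eq:1.4} and builds the H\"older estimate by hand through a case analysis relative to the diagonal $D=\{y_1=y_2\}$: an abstract competitor-curve lemma (Lemma~\ref{lem.reg:5}), then explicit controls steering $\t y$ to $y$ in time $\sim|y-\t y|^{1/2}$ when one endpoint lies on $D$ (Lemma~\ref{lem.reg:1}), along $D$ (Lemma~\ref{lem.reg:2}), near $D$ (Lemma~\ref{lem.reg:3}), and away from $D$ (Lemma~\ref{lem.reg:4}); time-Lipschitz continuity is proved separately, and in any dimension, by a shift-and-truncate argument (Theorem~\ref{thm.reg:2}). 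Your desingularising map $\Phi(x)=(x_1+x_2,\,Z(x_1-x_2))$ with $Z'(w)=(1-e^{-|w|})^{-1/2}$ collapses all of this: it turns the degenerate Lagrangian into the uniformly convex $\tilde L=\dot\sigma^2/(4(1+a(z)))+\dot z^2/4$ with $a\in C^{1,1}$, so $\tilde u$ is globally Lipschitz by the classical theory (or by the easy instance of the paper's own Lemma~\ref{lem.reg:5} with $\gamma=1$), and the exponent $1/2$ drops out as the H\"older modulus of $\Phi$ near the diagonal. What the paper's approach buys is that the time-Lipschitz bound holds for every $N\ge 1$ without any two-dimensional structure, and the argument is entirely self-contained; what your approach buys is a transparent geometric explanation of the $1/2$ (it is exactly the regularity of $\Phi$, equivalently of $\sqrt{E(\cdot)}$), a shorter path to the spatial estimate, and immediate a.e.\ differentiability via Rademacher for $\tilde u$ composed with the diffeomorphism $\Phi$ off $D$. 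The delicate point you flag---that finite-cost curves for $L$ correspond bijectively to $W^{1,2}$ curves for $\tilde L$, in particular that $Z\circ(\xi_1-\xi_2)$ stays absolutely continuous because $\int(\dot\xi_1-\dot\xi_2)^2/(1-e^{-|\xi_1-\xi_2|})\,\d s<\infty$ along admissible curves---is real but manageable, and the identification of the \cite{CSS} solution with the value function is exactly Remark~\ref{rem:2}(c).
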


\section{Preliminaries}\label{preliminaries}

In this section we recall necessary lemmas and key steps of our method developed in \cite{CSS}.

We begin with the reformulation of \eqref{eq.uniq:0.1}, which allows us to associate the value function with our Hamilton--Jacobi problem.
As mentioned in Introduction, since $E(x)$ is a symmetric positive semi-definite matrix, its unique symmetric positive semi-definite square root $\sqrt{E(x)}$ is well-defined. Next, we notice that
\begin{equation}\label{refor}
\frac{1}{2}E(x)\nabla u\cdot\nabla u=\frac{1}{2}|\sqrt{E(x)} \nabla u|^2=\max_{a\in\R^N}\left(a\cdot\sqrt{E(x)}\nabla u-\frac{1}{2}|a|^2\right),
\end{equation}
by the use of the Legendre transform.
It is known that $u(x,t)$ is a viscosity solution of the initial-value problem \eqref{eq.uniq:0.1} if and only if $w(x,t):= u(x,T-t)$ is a viscosity solution of the following terminal-value problem
\[
\begin{cases}
w_t(x,t) - \frac{1}{2} E(x) \nabla w(x,t) \cdot\nabla w(x,t)  = 0, &x\in \R^N,\; t\in(0,T),\\
w(x,T) = g(x),&x\in \R^N.
\end{cases}
\]
Using \eqref{refor}, we rewrite the above terminal-value problem as
\begin{equation}\label{eq.terminal}
\begin{cases}
w_t(x,t) +  \min_{a\in\R^N}\left(a\cdot\sqrt{E(x)}\nabla w(x,t)+\frac{1}{2}|a|^2\right)  = 0, &x\in \R^N,\; t\in(0,T),\\
w(x,T) = g(x),&x\in \R^N.
\end{cases}
\end{equation}
We associate with \eqref{eq.terminal} the value function $v:\R^N\times [0,T]\to \R$ as follows
\begin{equation}\label{eq:1.4}
v(y,t) := \inf\left \{\frac{1}{2}\int_{t}^T|\alpha(s)|^2\d s + g(x(T)) \right \}, \quad y\in \R^N,\, t\in [0,T],
\end{equation}
where the infimum is taken over all measurable controls $\alpha:[0,T]\to \R^N$ and all solutions of
\[
\tag{$P(\alpha,y,t)$}
\begin{cases}
x^\prime(s) = \sqrt{E(x(s))}\alpha(s),&\text{for } t < s < T,\\
x(t) = y.
\label{eq:1.3}
\end{cases}
\]
Since $\sqrt{E(\cdot)}$ is only $1/2$-H\"{o}lder continuous (see \eqref{eq.reg:20}), solutions $x(s)$ are not unique in general.
\begin{remark}\label{rem:2} (a) Note that the infimum in \eqref{eq:1.4} is finite, since $\alpha\equiv 0$ is an admissible control.
Furthermore, the infimum in \eqref{eq:1.4} does not change if we restrict ourselves to controls $\alpha\in L^2((0,T),\R^N)$.
For $\alpha\in L^2((t,T),\R^N)$ there is always an absolutely continuous solution $x:[t,T]\to\R^N$ of ($P(\alpha,y,t)$).
For more details concerning these issues we refer the reader to \cite{CSS}.

(b) For convenience we consider the terminal-value problem \eqref{eq.terminal} instead of the initial-value problem \eqref{eq.uniq:0.1} .
Note that the Lipschitz or the H\"{o}lder regularity of $w$ and $u$ are the same.

(c) We showed in \cite[Theorem 4.4]{CSS} that $v$ defined by \eqref{eq:1.4} is a continuous bounded viscosity solution of \eqref{eq.terminal} provided that $g:\R\to \R$ is bounded and Lipschitz continuous.
Moreover, it was also shown in \cite[Section 5]{CSS} that every continuous viscosity solution of \eqref{eq.uniq:0.1} is unique and Lipschitz continuous on some time interval $(0,\tau)$, where $\tau>0$ depends on the Lipschitz constants of $g$ and $E(\cdot)$.
In contrast, the results of Section \ref{regularity} concern the time-Lipschitz continuity and the $1/2$-H\"{o}lder continuity in space  (the latter in dimension $N=2$ only), but they are global.\hfill $\square$
\end{remark}

\section{Regularity}\label{regularity}
In this section we show that the value function $v$ defined by \eqref{eq:1.4} associated with the two-dimensional
terminal-value problem \eqref{eq.terminal} is $1/2$-H\"{o}lder continuous in space and time-Lipschitz continuous.
We emphasize that the obtained regularity is the best possible.
In fact, we obtain a time-Lipschitz continuity of a value function $v$ in any dimension $N\geq 1$.
As a consequence of the regularity estimates, the value function turns to be an almost everywhere differentiable viscosity solution to initial and terminal-value problems.

Let us say a few words concerning the technical aspects. Our proof is based on an optimal control reformulation \eqref{eq:1.4} and  estimates of the value function.
To avoid repeating the similar arguments in slightly different circumstances, an abstract Lemma \ref{lem.reg:5} is  proven.
This lemma is helpful in showing the full regularity proof in the  possibly shortest way. Moreover, since Lemma \ref{lem.reg:5} is formulated in any dimension $N\geq 1$, it can be used in a future work on higher dimensional regularity.

We assume that, for every $\xi\in \R^N$, $A(\xi)$ is $N\times N$ real matrix such that
\begin{equation}\label{eq.reg:56}
\begin{cases}
\text{the map}\quad\R^N\ni\xi\mapsto A(\xi)\in \R^{N\times N}\quad\text{is continuous and}\\
|A(\xi)|\leq B|\xi |,\qquad\xi \in \R^N,
\end{cases}
\end{equation}
for some positive constant $B$.
Moreover, we assume that $h:\R^N\to \R$ satisfies the Lipschitz condition
\begin{equation}\label{eq.reg:58}
|h(y) - h(\t y)|\leq L|y-\t y|,\qquad y,\,\t y\in \R^N.
\end{equation}
Let $u:\R^N\times [0,T]\to\R$ be given by
\begin{equation}\label{eq.reg:59}
u(y,t) := \inf\left \{ \int_t^T|\alpha(s)|^2\d s + h(x(T))\right\},
\end{equation}
where the infimum is taken over all square integrable controls $\alpha:[0,T]\to \R^N$ and all solutions of
\begin{equation}\label{aux.prob}
\begin{cases}
\dot{x}(s) = A(x(s))\alpha(s),&s\in (t,T),\\
x(t) = y.
\end{cases}(\footnote{Note that the right hand-side is a Carath\'{e}odory map with a $L^1$--majorant, so there always  exists at least one absolutely continuous solution of this Cauchy problem.})\tag{P$_{\alpha, y ,t}$}
\end{equation}

First, we prove the time-Lipschitz regularity of a value function $u$ defined by \eqref{eq.reg:59}. Notice that the solution of \eqref{eq.terminal}, given 
by \eqref{eq:1.4}, satisfies assumptions of the theorem below. Hence, the announced time-Lipschitz regularity of a solution of \eqref{eq.uniq:0.1} follows.
\begin{theorem}\label{thm.reg:2}
Let $A:\R^N \to \R^{N\times N}$ satisfy \eqref{eq.reg:56}, $h:\R^N\to \R$ satisfy \eqref{eq.reg:58} and $u$ be the value function given by \eqref{eq.reg:59}. There is $k>0$ such that for all $y\in \R^N$, $t$, $\t t\in [0,T]$
\[
|u(y,t) - u(y,\t t)| \leq k|t- \t t|.
\]
\end{theorem}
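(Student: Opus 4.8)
The plan is to establish the two one-sided estimates $u(y,t)\leq u(y,\t t)$ and $u(y,\t t)\leq u(y,t)+k(\t t-t)$ for $t<\t t$, with $k$ independent of $y$. The first is pure monotonicity in the terminal horizon: given any admissible pair $(\alpha,x)$ for \eqref{aux.prob} started from $(y,\t t)$ in place of $(y,t)$, prepend the null control on $[t,\t t)$; since $A(\xi)\cdot 0=0$, the constant curve $x\equiv y$ solves $\dot x=A(x)\cdot 0$ there, so gluing it to $x$ produces an admissible pair started from $(y,t)$ with the \emph{same} cost $\int_{\t t}^T|\alpha|^2+h(x(T))$, the running cost being non-negative. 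Taking the infimum gives $u(y,t)\leq u(y,\t t)$. Using the null control on all of $[t,T]$ (which keeps $x\equiv y$) gives $u(y,t)\leq h(y)$ for every $y$ and $t$.

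Next I would record a matching lower bound valid near the terminal time. For any admissible $(\alpha,x)$ with $x(t)=y$, \eqref{eq.reg:56} together with the Cauchy--Schwarz inequality bounds the arclength, $|x(T)-y|\leq B\sqrt{T-t}\,\big(\int_t^T|\alpha|^2\big)^{1/2}$. Writing $c_0:=\int_t^T|\alpha|^2$ and invoking \eqref{eq.reg:58},
\[
c_0+h(x(T))\ \geq\ c_0-LB\sqrt{T-t}\,\sqrt{c_0}+h(y)\ \geq\ h(y)-\tfrac14 L^2B^2(T-t),
\]
whence $u(y,t)\geq h(y)-\tfrac14 L^2B^2(T-t)$. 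Combined with $u(y,t)\leq h(y)$ this already disposes of the regime $T-\t t\leq\tfrac12(T-t)$ (equivalently $T-t\leq 2(\t t-t)$): there $u(y,\t t)-u(y,t)\leq h(y)-\big(h(y)-\tfrac14 L^2B^2(T-t)\big)\leq\tfrac12 L^2B^2(\t t-t)$.

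In the complementary regime $T-\t t>\tfrac12(T-t)$ I would rescale time. Fix $\varepsilon>0$ and take an $\varepsilon$-optimal pair $(\alpha,x)$ for $(y,t)$; from $c_0+h(x(T))\leq u(y,t)+\varepsilon\leq h(y)+\varepsilon$ and the arclength bound, $c_0\leq LB\sqrt{T-t}\,\sqrt{c_0}+\varepsilon$, giving the crucial a priori estimate $c_0\leq 2L^2B^2(T-t)+2\varepsilon$. Let $\lambda:=(T-t)/(T-\t t)>1$ and let $\phi\colon[\t t,T]\to[t,T]$, $\phi(\sigma)=t+\lambda(\sigma-\t t)$, be the affine bijection; set $\t x:=x\circ\phi$ and $\t\alpha:=\lambda\,(\alpha\circ\phi)$. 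Then $\t x(\t t)=y$, $\dot{\t x}=A(\t x)\,\t\alpha$, $\t x(T)=x(T)$, and a change of variables gives $\int_{\t t}^T|\t\alpha|^2=\lambda c_0$, so $u(y,\t t)\leq\lambda c_0+h(x(T))=\big(c_0+h(x(T))\big)+(\lambda-1)c_0\leq u(y,t)+\varepsilon+(\lambda-1)c_0$. Since $\lambda-1=(\t t-t)/(T-\t t)\leq 2(\t t-t)/(T-t)$, the a priori estimate yields $(\lambda-1)c_0\leq 4L^2B^2(\t t-t)+O(\varepsilon)$; letting $\varepsilon\to 0$ and using $u(y,t)\leq u(y,\t t)$ gives $|u(y,t)-u(y,\t t)|\leq 4L^2B^2(\t t-t)$. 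Together with the previous regime, $k:=4L^2B^2$ works.

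The main obstacle is precisely the short-horizon regime $\t t\approx T$: the time-rescaling inflates the cost by the factor $\lambda=(T-t)/(T-\t t)$, which is \emph{unbounded} as $\t t\to T$, so a Lipschitz --- rather than merely H\"older --- bound survives only because (i) the a priori estimate forces $c_0=O(T-t)$, so the excess cost $(\lambda-1)c_0$ is of order $\t t-t$ whenever $T-\t t$ is comparable to $T-t$, and (ii) when $T-\t t\ll T-t$ one must abandon rescaling altogether and instead trap both $u(y,t)$ and $u(y,\t t)$ between $h(y)$ and $h(y)-O(T-t)$, which is legitimate exactly because $T-t\leq 2(\t t-t)$ there. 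Keeping $k$ independent of $y$ is the secondary point to watch, and it is what forces the arclength and $c_0$ bounds to pass through \eqref{eq.reg:56} rather than through any pointwise estimate on the trajectory.
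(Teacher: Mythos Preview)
Your argument is correct, but the route differs from the paper's in the upper estimate $u(y,\t t)-u(y,t)\leq k(\t t-t)$. The paper uses a time \emph{shift} rather than a rescaling: given an $\e$-optimal pair $(\alpha_2,x_2)$ for $(y,t)$, it sets $x_3(s):=x_2(s-(\t t-t))$ and $\alpha_3(s):=\alpha_2(s-(\t t-t))$ on $[\t t,T]$, so the new trajectory ends at $x_2(T-(\t t-t))$ instead of $x_2(T)$. The running-cost difference is then \emph{negative}, equal to $-\tfrac12\int_{T-(\t t-t)}^T|\alpha_2|^2$, and the terminal-cost discrepancy $L|x_2(T)-x_2(T-(\t t-t))|$ is bounded via \eqref{eq.reg:56}, Cauchy--Schwarz and Young's inequality with $\eta=\tfrac12$ by exactly $\tfrac12(LB)^2(\t t-t)+\tfrac12\int_{T-(\t t-t)}^T|\alpha_2|^2$; the integral terms cancel and no a priori bound on $c_0$ is ever needed, nor any case split on the size of $T-\t t$.

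Your rescaling $\phi(\sigma)=t+\lambda(\sigma-\t t)$ keeps the terminal point $x(T)$ but inflates the running cost by the factor $\lambda=(T-t)/(T-\t t)$, which forces you to (i) prove the energy bound $c_0\leq O(T-t)$ from the near-optimality and \eqref{eq.reg:56}, \eqref{eq.reg:58}, and (ii) split off the short-horizon regime $T-\t t\leq\tfrac12(T-t)$ where $\lambda$ blows up. Both steps are carried out correctly (your $O(\e)$ term is $\e\big(1+4(\t t-t)/(T-t)\big)$, harmless for fixed $t<T$), and the final constant $k=4(LB)^2$ is merely looser than the paper's $k=\tfrac12(LB)^2$. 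The paper's shift argument is shorter and more self-contained; your approach has the side benefit of making explicit the a priori estimate $\int_t^T|\alpha|^2\,\mathrm{d}s=O(T-t)$ for near-optimal controls, which is of independent interest.
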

\begin{proof}
Fix $y\in \R^N$, $\e>0$ and $t$, $\t t\in [0,T]$.
Without loss of generality we assume that $t\leq \t t$.
We find a control $\alpha_0$ and the solution $x_0$ of (P$_{\alpha, y,\t t}$) such that
\[
u(y,\t t) \geq  \frac{1}{2}\int_{\t t}^T|\alpha_0(s)|^2\d s + h(x_0(T)) - \e.
\]
We define
\[
\begin{aligned}
x_1(s) :=
\begin{cases}
x_0(\t t), & s\in[t,\t t),\\
x_0(s), &s\in[\t t, T],
\end{cases}\;\text{ and }\;
\alpha_1(s) :=
\begin{cases}
0, &s\in [t,\t t),\\
\alpha_0(s), & s\in[\t t,T].
\end{cases}
\end{aligned}
\]
Then $x_1$ is a solution of (P$_{\alpha_1,y,t}$) so
\begin{equation}\label{eq.reg:42}
u(y,t) - u(y,\t t) \leq \frac{1}{2}\int_t^T|\alpha_1(s)|^2\d s + h(x_1(T)) - \frac{1}{2}\int_{\t t}^T|\alpha_0(s)|^2\d s- h(x_0(T))+ \e = \e .
\end{equation}

On the other hand, we find a control $\alpha_2$ and the solution $x_2$ of (P$_{\alpha_2,y,t}$) such that
\[
u(y,t) \geq \frac{1}{2}\int_{t}^T|\alpha_2(s)|^2\d s + h(x_2(T)) - \e.
\]
We define $x_3(s) := x_2(s-(\t t-t))$ and $\alpha_3(s):=\alpha_2(s-(\t t- t))$, for $s\in[\t t,T]$.
Then $x_3$ is a solution of (P$_{\alpha_3,y,\t t}$), so we have
\begin{equation}\label{eq.reg:43}
\begin{aligned}
u(y,\t t )- u(y,t) &\leq \frac{1}{2}\int_{\t t }^T|\alpha_3(s)|^2\d s+ h(x_3(T)) - \frac{1}{2}\int_t^T|\alpha_2(s)|^2\d s- h(x_2(T)) + \e \\
&\leq -\frac{1}{2}\int_{T-(\t t -t)}^T|\alpha_2(s)|^2\d s + L|x_2(T-(\t t-t)) - x_2(T)| + \e,
\end{aligned}
\end{equation}
where we used  \eqref{eq.reg:58}.
We estimate the middle term of the right hand-side using \eqref{eq.reg:56}
\begin{equation}\label{eq.reg:11}
\begin{aligned}
L|x_2(T-(\t t - t)) - x_2(T)|&= L\left | y + \int_t ^{T-(\t t-t)} A(x_2(s))\alpha_2(s)\d s - y - \int_t^T A(x_2(s))\alpha_2(s)\d s \right|\\
& \leq LB\int_{T-(\t t-t)}^T|\alpha_2(s)|\d s\leq LB(\t t -t)^{1/2}\le \int_{T-(\t t-t)}^T|\alpha_2(s)|^2\d s\pr^{1/2} \\
&\leq \frac{(LB)^2}{4\eta}(\t t - t) + \eta \int_{T-(\t t-t)}^T|\alpha_2(s)|^2\d s.
\end{aligned}
\end{equation}
We select $\eta = \frac{1}{2}$ and plug it into \eqref{eq.reg:43} to deduce
\begin{equation}\label{eq.reg:44}
u(y,\t t) - u(y,t)\leq \frac{(LB)^2}{2}(\t t -t) + \e.
\end{equation}
Combining \eqref{eq.reg:42} and \eqref{eq.reg:44} shows that
\[
|u(y,\t t) - u(y,t)|\leq \frac{(LB)^2}{2}|\t t -t| + \e.
\]
Since $\e >0$ is arbitrary, the assertion follows.
\end{proof}

The following technical lemma allows us to provide a shorter proof of regularity estimates. 
\begin{lemma}\label{lem.reg:5}
We assume that $A(\cdot)$, $h$ satisfy \eqref{eq.reg:56}, \eqref{eq.reg:58} (respectively) and $u$ is defined by \eqref{eq.reg:59}.
Let $y$, $\t y\in \R^N$, $t\in[0,T]$ and $a$, $b>0$, $\gamma\in (0,1]$ be fixed.
We define
\begin{equation}\label{eq.reg:50}
\t t = t + a|y-\t y|^\gamma.
\end{equation}
If there exists a measurable control $\t \alpha:[t,\t t]\to \R^N$ and a solution $\t x:[t,\t t]\to \R^N$ of the problem
\begin{equation}\label{eq.reg:46}
\begin{cases}
\t x^\prime(s) = A(\t x(s))\t \alpha(s),&s\in (t,\t t),\\
\t x(t) = \t y\quad\text{and}\quad \t x(\t t) = y,
\end{cases}
\end{equation}
such that
\begin{equation}\label{eq.reg:47}
\int_t^{\t t} |\t \alpha(s)|^2 \d s \leq b|y- \t y|^\gamma
\end{equation}
and
\begin{equation}\label{eq.reg:53}
\left |y - \t x(s)\right | \leq |y - \t y|,\qquad s\in (t,\t t),
\end{equation}
then
\[
|u(y,t) - u(\t y,t)|\leq c |y-\t y|^\gamma,
\]
where
\begin{equation}\label{eq.reg:60}
c := \frac{1}{2}b + L|y -\t y|^{1-\gamma} + \frac{1}{2}a(LB)^2
\end{equation}
does not depend on $t$.
\end{lemma}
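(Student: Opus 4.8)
The plan is to establish the two one-sided bounds $u(\t y,t)-u(y,t)\leq c\,|y-\t y|^\gamma$ and $u(y,t)-u(\t y,t)\leq c\,|y-\t y|^\gamma$, whose combination is the assertion. In both directions the mechanism is the same: take a near-optimal control--trajectory pair for one of the two value functions and splice it with a truncation (or a time-reversal) of the connecting arc $\t x$ to manufacture an admissible competitor for the other. The ingredients are elementary: restrictions and time-reversals of solutions of $\dot x=A(x)\alpha$ are again solutions of an equation of the same form (with the control modified in the obvious way); $\t\alpha\in L^2$ by \eqref{eq.reg:47}; and gluing two admissible trajectories that coincide at the junction instant, together with the correspondingly glued control, stays admissible. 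I would separate the generic case $\t t\leq T$ from the degenerate case $\t t>T$; the constant \eqref{eq.reg:60} is forced by the latter.

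In the generic case $\t t\leq T$, fix $\e>0$ and choose $\alpha_0\in L^2$ and a solution $x_0$ of $\dot{x}_0=A(x_0)\alpha_0$ on $[\t t,T]$ with $x_0(\t t)=y$ that is $\e$-optimal for $u(y,\t t)$. Run $\t x$ (hence $\t\alpha$) on $[t,\t t]$, which steers $\t y$ into $y$, and then $x_0$ (hence $\alpha_0$) on $[\t t,T]$; since $\t x(\t t)=y=x_0(\t t)$ this pair is admissible in the definition of $u(\t y,t)$, with cost
\[
\tfrac12\!\int_t^{\t t}|\t\alpha(s)|^2\d s+\tfrac12\!\int_{\t t}^T|\alpha_0(s)|^2\d s+h(x_0(T))\leq\tfrac12 b\,|y-\t y|^\gamma+u(y,\t t)+\e ,
\]
by \eqref{eq.reg:47}. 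Letting $\e\to0$, then applying the time-Lipschitz bound of Theorem \ref{thm.reg:2} (with $k=\tfrac12(LB)^2$) and $\t t-t=a\,|y-\t y|^\gamma$, we obtain
\[
u(\t y,t)\leq u(y,\t t)+\tfrac12 b\,|y-\t y|^\gamma\leq u(y,t)+\bigl(\tfrac12 b+\tfrac12 a(LB)^2\bigr)|y-\t y|^\gamma ,
\]
which is already below $c\,|y-\t y|^\gamma$. The reverse inequality follows the same recipe with $y$ and $\t y$ interchanged and $\t x$ replaced by its time-reversal $s\mapsto\t x(\t t+t-s)$, which is again a solution of \eqref{eq.reg:46} with the endpoints swapped and the same energy.

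In the degenerate case $\t t>T$ the arc outlasts the whole control window, so I would restrict $\t x$ to $[t,T]$: it steers $\t y$ into $p:=\t x(T)$, and by \eqref{eq.reg:53} one has $|p-y|\leq|y-\t y|$; in this approach that is where \eqref{eq.reg:53} enters. With $\t\alpha|_{[t,T]}$ as control,
\[
u(\t y,t)\leq\tfrac12\!\int_t^T|\t\alpha(s)|^2\d s+h(p)\leq\tfrac12 b\,|y-\t y|^\gamma+h(y)+L\,|y-\t y| .
\]
Since $u(y,T)=h(y)$, Theorem \ref{thm.reg:2} applied at the single point $y$ on $[t,T]$ gives $h(y)\leq u(y,t)+\tfrac12(LB)^2(T-t)$; because $T-t<\t t-t=a\,|y-\t y|^\gamma$, this produces exactly $u(\t y,t)\leq u(y,t)+c\,|y-\t y|^\gamma$ with $c$ as in \eqref{eq.reg:60}. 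The reverse bound here is even shorter: the control $\alpha\equiv0$ gives $u(y,t)\leq h(y)\leq h(\t y)+L|y-\t y|$, and Theorem \ref{thm.reg:2} at $\t y$ (again with $T-t<a|y-\t y|^\gamma$) finishes it.

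The real obstacle the argument must get around is that $A(\cdot)$ is only continuous, so one cannot perturb a near-optimal control issued from $y$ and retain any control over the trajectory it drives out of $\t y$: there is no Gronwall-type stability and trajectories are non-unique. The hypotheses of the lemma circumvent exactly this by supplying an explicit arc $\t x$ that realizes the transfer $y\leftrightarrow\t y$ with quantitative bounds on time \eqref{eq.reg:50}, energy \eqref{eq.reg:47}, and spatial excursion \eqref{eq.reg:53}. Once that arc is available the remainder is bookkeeping: verifying admissibility of the splice and absorbing the time-mismatch via Theorem \ref{thm.reg:2}. The single point that cannot be finessed is the comparison of $\t t$ with $T$, which forces the case split above and is responsible for the term $L\,|y-\t y|^{1-\gamma}$ in $c$.
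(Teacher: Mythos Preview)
Your argument is correct, and the skeleton---the case split on $\t t\le T$ versus $\t t>T$, splicing $\t x$ (or its time-reversal) onto a near-optimal trajectory, and invoking \eqref{eq.reg:53} only in the degenerate case---coincides with the paper's. The difference is organizational: you absorb the time-mismatch by quoting Theorem~\ref{thm.reg:2} as a black box (with $k=\tfrac12(LB)^2$), taking a near-optimal pair for $u(y,\t t)$ and gluing at $\t t$; the paper instead selects a near-optimal pair for $u(y,t)$, \emph{time-shifts} it onto $[\t t,T]$, and then bounds $|x_0(T)-x_0(T-(\t t-t))|$ inline via Cauchy--Schwarz and Young's inequality with $\eta=\tfrac12$---precisely the computation behind Theorem~\ref{thm.reg:2}. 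Your route is more modular and a bit shorter, since Theorem~\ref{thm.reg:2} is already in hand; the paper's is self-contained but duplicates that work. Your zero-control shortcut for the reverse bound when $\t t>T$ is also a pleasant simplification over the paper's use of the time-reversed arc in that case.
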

\begin{remark}\label{rem.reg:1}
If $\gamma = 1$, then the constant $c$ given by \eqref{eq.reg:60} is global in a sense that it does not depend on the distance $|y -\t y|$.
Moreover, if we restrict ourselves to $|y-\t y|\leq \eta$, for some $\eta >0$, then $c$ in \eqref{eq.reg:60} is of the form
\begin{equation}\label{eq.reg:67}
c = \frac{1}{2}b + L\eta^{1-\gamma} + \frac{1}{2}a(LB)^2.
\end{equation}
\end{remark}
\begin{proof}
Fix $y$, $\t y\in \R^N$, $t\in [0,T]$ and $\e >0$.
We first estimate the term $u(\t y,t) - u(y,t)$.
There are a control $\alpha_0:[t,T]\to \R^N$ and a solution $x_0$ of (P$_{\alpha_0,y,t}$)  satisfying
\[
u(y,t) >  \frac{1}{2}\int_t^T |\alpha_0(s)|^2 \d s + h(x_0(T)) -\e.
\]
We consider two cases.
If $\t t \leq T$, then we define
\[
\begin{aligned}
x_1(s) :=
\begin{cases}
\t x(s), & s\in[t,\t t),\\
x_0(s -(\t t - t)), &s\in[\t t, T],
\end{cases}\qquad\text{and}\qquad
\alpha_1(s) :=
\begin{cases}
\t \alpha(s), & s\in[t,\t t),\\
\alpha_0(s -(\t t - t)), &s\in[\t t, T].
\end{cases}
\end{aligned}
\]
Since $x_0$ is a solution of (P$_{\alpha_0,y,t}$) and $\t \alpha$, $\t x$ satisfy \eqref{eq.reg:46} (in particular $\t x(t)=\t y$),
we deduce that $x_1$ is a solution of (P$_{\alpha_1, \t y, t}$).
Thus
\begin{equation}\label{eq.reg:48}
\begin{aligned}
u(\t y,t) - u(y,t) &\leq \frac{1}{2}\int_t^T|\alpha_1(s)|^2\d s + h(x_1(T)) -\frac{1}{2} \int_t^T|\alpha_0(s)|^2\d s - h(x_0(T))+ \e \\
&\leq \frac{1}{2}\int_t^{\t t}|\t \alpha(s)|^2 \d s + \frac{1}{2}\int_{\t t}^T|\alpha_0(s-(\t t-t))|^2\d s + h(x_0(T-(\t t-t)))\\
&\qquad -\frac{1}{2} \int_t^T|\alpha_0(s)|^2\d s - h(x_0(T))+\e\\
&\leq \frac{1}{2}b|y-\t y|^\gamma - \frac{1}{2}\int_{T-(\t t-t)}^T|\alpha_0(s)|^2 \d s + L\left |x_0(T) - x_0(T-(\t t- t))\right |+\e,
\end{aligned}
\end{equation}
where we incorporated \eqref{eq.reg:46} and \eqref{eq.reg:58}.
We deal with the last term using \eqref{eq.reg:56} as in \eqref{eq.reg:11} to obtain
\[
\begin{aligned}
L\left |x_0(T) - x_0(T-(\t t- t))\right|  \leq \frac{(LB)^2}{4\eta}(\t t -t ) + \eta \int_{T-(\t t-t)}^T|\alpha_0(s)|^2\d s.
\end{aligned}
\]
Select $\eta =\frac{1}{2}$ and plug the above inequality into \eqref{eq.reg:48} and use \eqref{eq.reg:50}
\begin{equation}\label{eq.reg:51}
u(\t y,t) - u(y,t) \leq \frac{1}{2}b|y-\t y|^\gamma + \frac{(LB)^2}{2}(\t t-t)+\e \leq \le \frac{1}{2}b +\frac{(LB)^2}{2}a\pr |y -\t y|^\gamma + \e.
\end{equation}

Let us now consider the other case $\t t  >T$. Since $\t x$ is a solution of (P$_{\t \alpha,\t y,t}$) we obtain
\begin{equation}\label{eq.reg:52}
\begin{aligned}
u(\t y,t ) -u(y,t) & \leq \frac{1}{2}\int_t^T|\t \alpha(s)|^2 \d s + h(\t x (T)) - \frac{1}{2}\int_t^T|\alpha_0(s)|^2\d s - h(x_0(T)) +\e\\
&\leq \frac{1}{2} b|y-\t y|^\gamma - \frac{1}{2}\int_t^T|\alpha_0(s)|^2\d s +L \left | \t x (T) - x_0(T)\right |+\e.
\end{aligned}
\end{equation}
We estimate the last term using \eqref{eq.reg:56} and \eqref{eq.reg:53}
\begin{equation}\label{eq.reg:62}
\begin{aligned}
L \left | \t x (T) - x_0(T)\right |&=L \left | \t x(T) - y - \int_t^TA(x_0(s))\alpha_0(s)\d s\right |\leq L\left |\t x(T) -y\right| + LB\int_t^T|\alpha_0(s)| \d s\\
&\leq L|y-\t y| + LB(T-t)^{1/2}\le \int_t^T|\alpha_0(s)|^2\d s\pr ^{1/2}\\
&\leq L|y-\t y| + \frac{(LB)^2}{4\eta}(T-t) + \eta \int _t^T|\alpha_0(s)|^2 \d s.
\end{aligned}
\end{equation}
Select $\eta =\frac{1}{2}$, to obtain
\begin{equation}\label{eq.reg:54}
\begin{aligned}
u(\t y,t ) -u(y,t)&\leq \frac{1}{2} b|y-\t y|^\gamma  + L |y-\t y| + \frac{(LB)^2}{2}(T-t)+\e\\
&\leq \le \frac{1}{2}b + L|y -\t y|^{1-\gamma} + \frac{(LB)^2}{2}a\pr |y-\t y|^\gamma+\e.
\end{aligned}
\end{equation}
If we combine estimates for both cases \eqref{eq.reg:51} and \eqref{eq.reg:54}, we obtain in general
\[
u(\t y,t ) -u(y,t)\leq\le \frac{1}{2}b + L|y -\t y|^{1-\gamma} + \frac{(LB)^2}{2}a\pr |y-\t y|^\gamma+\e.
\]
Since $\e >0$ is arbitrary, we finally deduce
\begin{equation}\label{eq.reg:55}
u(\t y,t ) -u(y,t)\leq\le \frac{1}{2}b + L|y -\t y|^{1-\gamma} + \frac{(LB)^2}{2}a\pr |y-\t y|^\gamma.
\end{equation}

To estimate the term $u(y,t) - u(\t y,t)$, we follow the above proof, but inverse the time in a sense.
Namely, instead of functions $\t \alpha$, $\t x $, we use $\overline \alpha$, $\overline x$ defined by
\[
\overline{\alpha}(s):= - \t\alpha(\t t-(s-t))\quad\text{and}\quad \overline{x}(s):= \t x(\t t- (s- t)),\qquad s\in [t,\t t].
\]
Then, after similar calculations as above, we obtain exactly the same constant as in \eqref{eq.reg:55}, i.e.,
\[
u(y,t) - u(\t y,t ) \leq \le \frac{1}{2}b + L|y -\t y|^{1-\gamma} + \frac{(LB)^2}{2}a\pr |y-\t y|^\gamma,
\]
which completes the proof.
\end{proof}

From now on, we assume that the dimension $N=2$ and we proceed with the space-regularity of the value function $v$ defined by \eqref{eq:1.4}.
Let us recall that $g:\R^2 \to \R$ is assumed to be bounded and Lipschitz continuous, i.e., there is $L>0$ such that
\begin{equation}\label{assumption.reg}
|g(y)-g(\t y)| \leq L|y - \t  y|,\qquad y,\t y \in \R^2.
\end{equation}

The proof is divided into a sequence of lemmas, but we need some preparation first.
Let us recall that, for $x\in \R^2$, the explicit form of a matrix $\sqrt{E(x)}$ is known,
\begin{equation}\label{eq.reg:20}
\sqrt{E(x)}:=
\frac{1}{2}
\le
\begin{matrix}
\sqrt{1 + e^{-|x_1-x_2|}} + \sqrt{1-e^{-|x_1 -x_2|}}  &  \sqrt{1 + e^{-|x_1-x_2|}} - \sqrt{1-e^{-|x_1 -x_2|}}\\
\sqrt{1 + e^{-|x_1-x_2|}} - \sqrt{1-e^{-|x_1 -x_2|}}  &  \sqrt{1 + e^{-|x_1-x_2|}} + \sqrt{1-e^{-|x_1 -x_2|}}
\end{matrix}
\pr,
\end{equation}
so there is $B>0$ such that
\begin{equation}\label{eq.reg:21}
|\sqrt{E(x)}\xi|\leq B |\xi|.
\end{equation}
Let us denote by $D$ the diagonal in $\R^2$,
\[
D := \{(y_1,y_2) \in \R^2 \mid y_1 = y_2\}.
\]
The inverse $\sqrt{E(x)}^{-1}$ is well-defined whenever $x=(x_1,x_2)\in\R^2\setminus D$ and in this case we have
\[
\sqrt{E(x)}^{-1} =
\frac{1}{2}\le
\begin{matrix}
\frac{1}{\sqrt{1 + \zeta}} + \frac{1}{\sqrt{1- \zeta}}  &  \frac{1}{\sqrt{1 +  \zeta}} - \frac{1}{\sqrt{1- \zeta}}\\
\frac{1}{\sqrt{1 +  \zeta}} - \frac{1}{\sqrt{1- \zeta}} & \frac{1}{\sqrt{1 +  \zeta}} + \frac{1}{\sqrt{1- \zeta}}
\end{matrix}
\pr,
\]
where $\zeta:= e^{-|x_1 - x_2|}$ for short.
For $x\in \R^2\setminus D$ and $\xi=(\xi_1,\xi_2) \in \R^2$ we calculate
\[
\sqrt{E(x)}^{-1}\xi = \frac{1}{2}
\le \begin{matrix}
\frac{1}{\sqrt{1 + \zeta}}\le \xi_1 + \xi_2\pr  + \frac{1}{\sqrt{1- \zeta}}\le \xi_1 - \xi_2\pr \\
\frac{1}{\sqrt{1 + \zeta}}\le \xi_1 + \xi_2\pr  - \frac{1}{\sqrt{1- \zeta}}\le \xi_1 - \xi_2\pr
\end{matrix} \pr,
\]
so using the explicit formula for $\zeta$
\begin{equation}\label{eq.reg:69}
\left|\sqrt{E(x)}^{-1}\xi \right|^2= \frac{1}{4}\le 2\frac{1}{1+e^{-|x_1-x_2|}}\le \xi_1 + \xi_2\pr^2 +2 \frac{1}{1-e^{-|x_1-x_2|}}\le \xi_1- \xi_2\pr ^2 \pr.
\end{equation}
We use the inequality
\[
\frac{1}{1-e^{-s}}\leq \frac{1+s}{s},\qquad s > 0,
\]
to obtain the upper bound
\begin{equation}\label{eq.reg:65}
\left|\sqrt{E(x)}^{-1}\xi \right|^2\leq  \frac{1}{2}\le \le \xi_1 + \xi_2\pr^2 + \frac{1+|x_1-x_2|}{|x_1-x_2|}\le \xi_1- \xi_2\pr ^2 \pr.
\end{equation}

We recall that by $v$ we denote a value function given by \eqref{eq:1.4}, which is a viscosity solution to \eqref{eq.terminal}. As explained in Section \ref{preliminaries}, a solution to \eqref{eq.uniq:0.1} is obtained from $v$ via an inverse of time.
\begin{lemma}\label{lem.reg:1}
There exists a constant $c_1>0$ such that for all $y\in D$, $\t y\in \R^2 \setminus D$ and $t\in [0,T]$
\[
|y-\t y|\leq 1 \implies  |v(y,t) - v(\t y,t)| \leq c_1|y - \t y|^{1/2}.
\]
\end{lemma}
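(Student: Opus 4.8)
The plan is to apply the abstract Lemma~\ref{lem.reg:5} with exponent $\gamma=1/2$, terminal cost $h=g$ (so that $L$ is the constant in \eqref{assumption.reg}) and matrix $A(\cdot):=\sqrt2\,\sqrt{E(\cdot)}$. After the change of control $\alpha=\sqrt2\,\beta$, the value function $v$ of \eqref{eq:1.4} is precisely of the form \eqref{eq.reg:59} driven by this $A$, which is continuous and bounded (by \eqref{eq.reg:21}), hence satisfies \eqref{eq.reg:56}. Fix $y\in D$, $\t y\in\R^2\setminus D$ with $r:=|y-\t y|\leq1$, and $t\in[0,T]$. With the time horizon $\t t:=t+a\,r^{1/2}$ ($a>0$ a constant to be chosen), it suffices to produce a control $\t\alpha$ and a solution $\t x$ of \eqref{eq.reg:46} on $[t,\t t]$ satisfying the energy bound \eqref{eq.reg:47} and the confinement \eqref{eq.reg:53}; Lemma~\ref{lem.reg:5} then gives $|v(y,t)-v(\t y,t)|\leq c_1 r^{1/2}$, with $c_1$ the explicit constant of \eqref{eq.reg:60}, which here is $\leq\tfrac12 b+L+a(LB)^2$ because $r\leq1$ controls the factor $|y-\t y|^{1-\gamma}=r^{1/2}$.

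The one genuine difficulty is the construction of the connecting trajectory, since $\t x$ must reach the diagonal point $y\in D$, where $\sqrt{E(\cdot)}^{-1}$ is singular. Writing $\f:=\t x_1-\t x_2$, the bound \eqref{eq.reg:65} shows that any admissible $\t x$ with control $\t\alpha=A(\t x)^{-1}\dot{\t x}$ obeys an estimate of the type
\[
\int_t^{\t t}|\t\alpha(s)|^2\d s\ \leq\ \int_t^{\t t}|\dot{\t x}(s)|^2\d s\ +\ \tfrac14\int_t^{\t t}\frac{\dot\f(s)^2}{|\f(s)|}\d s ,
\]
and along the straight segment from $\t y$ to $y$ the last integrand blows up like $1/(\t t-s)$ near $s=\t t$, so it is not integrable. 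The remedy is to approach $D$ \emph{quadratically}: set $\sigma(s):=(\t t-s)/(\t t-t)\in[0,1]$ and define $\t x$ by requiring $\t x_1(s)-\t x_2(s)=(\t y_1-\t y_2)\,\sigma(s)^2$ and $\t x_1(s)+\t x_2(s)$ to be the affine function with values $\t y_1+\t y_2$ at $s=t$ and $2y_1$ at $s=\t t$. Then $\t x\in C^1([t,\t t];\R^2)$ with $\t x(t)=\t y$, $\t x(\t t)=(y_1,y_1)=y$, and $\t x(s)\notin D$ for $s\in[t,\t t)$; hence $\t\alpha:=A(\t x)^{-1}\dot{\t x}$ is a well-defined measurable control on $[t,\t t)$ and $\t x$ solves \eqref{eq.reg:46}.

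With this profile $\dot\f(s)^2/|\f(s)|\equiv 4|\t y_1-\t y_2|/(\t t-t)^2$ is \emph{constant}, so $\tfrac14\int_t^{\t t}\dot\f^2/|\f|\,\d s=|\t y_1-\t y_2|/(\t t-t)$; and since $\sigma$ is affine and $\f$ quadratic in $s$, $\int_t^{\t t}|\dot{\t x}|^2\d s$ is bounded by a fixed multiple of $\bigl((2y_1-\t y_1-\t y_2)^2+(\t y_1-\t y_2)^2\bigr)/(\t t-t)$. As $y=(y_1,y_1)$, the identity $(2y_1-\t y_1-\t y_2)^2+(\t y_1-\t y_2)^2=2r^2$ holds, so (using also $|\t y_1-\t y_2|\leq\sqrt2\,r$)
\[
\int_t^{\t t}|\t\alpha(s)|^2\d s\ \leq\ \frac{C_1 r^2+C_2 r}{\t t-t}\ =\ \frac{C_1 r^{3/2}+C_2 r^{1/2}}{a}\ \leq\ \frac{C_1+C_2}{a}\,r^{1/2}
\]
for absolute constants $C_1,C_2$ and $r\leq1$; this is \eqref{eq.reg:47} with $b:=(C_1+C_2)/a$. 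Finally, for $s\in(t,\t t)$ the same identity and a one-line computation give
\[
|y-\t x(s)|^2\ =\ \frac{\sigma(s)^2}{2}\Bigl((2y_1-\t y_1-\t y_2)^2+\sigma(s)^2(\t y_1-\t y_2)^2\Bigr)\ \leq\ \sigma(s)^2\,r^2\ \leq\ r^2 ,
\]
which is \eqref{eq.reg:53}. All hypotheses of Lemma~\ref{lem.reg:5} being met, the assertion follows.

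The main obstacle is precisely this choice of trajectory, and it is a delicate balancing act: the exponent $2$ in $|\t x_1-\t x_2|\sim\sigma^2$ is the \emph{critical} rate of approach to the singular diagonal — it turns the a priori divergent integral $\int\dot\f^2/|\f|$ into a quantity bounded by a constant times $|\t y_1-\t y_2|/(\t t-t)$ — while, fortunately, the very same profile automatically keeps the whole trajectory inside the ball $\{|y-\cdot|\leq r\}$, because the displacement transverse to $D$ (of order $\sigma^2$) decays faster than the displacement along $D$ (of order $\sigma$). Choosing the horizon proportional to $r^{1/2}$ then converts the resulting $O(r)$ kinetic energy into the advertised $O(r^{1/2})$ Hölder bound; the estimate for $u(y,t)-u(\t y,t)$ follows by running the same construction backwards in time, exactly as in the proof of Lemma~\ref{lem.reg:5}.
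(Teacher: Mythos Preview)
Your proof is correct and follows essentially the same approach as the paper's: apply Lemma~\ref{lem.reg:5} with $\gamma=1/2$, choose the time horizon $\t t - t$ proportional to $|y-\t y|^{1/2}$, and construct a trajectory whose distance to the diagonal vanishes \emph{quadratically} in $\t t-s$ so that the singular term $\dot\f^2/|\f|$ becomes integrable. The only (minor) difference is the precise trajectory: the paper takes $\t x(s)=\sigma_P(s)\t y+(1-\sigma_P(s))y$ with $\sigma_P(s)=\bigl((\t t-s)/(\t t-t)\bigr)^2$, i.e.\ the straight segment run with a quadratic time parametrisation, whereas you let the sum coordinate $\t x_1+\t x_2$ move linearly and only the difference $\t x_1-\t x_2$ quadratically; in both cases $|\t x_1(s)-\t x_2(s)|=|\t y_1-\t y_2|\bigl((\t t-s)/(\t t-t)\bigr)^2$, which is the decisive feature.
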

\begin{proof}
Take $y\in D$ and $\t y\in \R^2 \setminus D$ with
\begin{equation}\label{zeta}
|y - \t y|\leq 1.
\end{equation}
Our strategy is to use Lemma \ref{lem.reg:5} with $N = 2$, $A(x) = \sqrt{E(x)}$, $h=g$, and then identify $u = v$. Let
\begin{equation}\label{eq.reg:16}
\tilde{t} := t + 2|y-\t y|^{1/2}.
\end{equation}
We are about to construct $\t \alpha:[t,\t t)\to \R^2$(\footnote{We intentionally do not care about the endpoints.}) and $\t x:[t,\t t]\to \R^2$ satisfying \eqref{eq.reg:46}--\eqref{eq.reg:53}.
Let us define
\begin{equation}\label{eq.reg:64}
\tilde{x}(s) = \sigma(s)\t y + (1-\sigma(s)) y,\qquad s\in [t,\t t],
\end{equation}
where
\begin{equation}\label{eq.reg:17}
\sigma(s) = \frac{\le |y-\t y|^{1/2} - \frac{1}{2}(s-t)\pr ^2}{|y - \t y|}\in [0,1],\qquad s\in [t,\t t].
\end{equation}
The above formulas imply \eqref{eq.reg:53} and, taking into account \eqref{eq.reg:16}, yield $\t x (t) = \t y$ and $\t x(\t t) = y$.
Moreover, one can check that $\t x$ satisfies
\begin{equation}\label{eq.reg:63}
\t x ^\prime(s) = \frac{1}{|y - \t y|^{1/2} -\frac{1}{2}(s-t)}(y-\t x(s)),\qquad s\in (t,\t t).
\end{equation}
We use the fact that $y\in D$ and write
\begin{equation}\label{rho}
\rho(s):= |\t x_1(s) - \t x_2(s)| = \sigma(s)|\t y_1 - \t y_2|
\end{equation}
for short, where $\t x (s) = (\t x_1(s), \t x_2(s))$.
Since $y\in D$ and by \eqref{eq.reg:17}, \eqref{zeta} we have
\begin{equation}\label{eq.reg:66}
\rho(s) \leq |\t y_1 - \t y_2|\leq |\t y_1 - y_1| + |y_2 - \t y_2| \leq \sqrt{2}|y-\t y|\leq \sqrt{2}.
\end{equation}
By the above and by the fact that $\t y \in \R^2\setminus D$, $\t x_1 (s) \neq \t x_2 (s)$, for $s\in [t,\t t)$,  so we may define
\[
\t \alpha (s) = \frac{1}{|y - \t y|^{1/2} -\frac{1}{2}(s-t)}\sqrt{E(\t x (s))}^{-1}(y-\t x(s)).
\]
By the definition of $\t \alpha$ and \eqref{eq.reg:63},  we have
\[
\sqrt{E(\t x(s))}\t \alpha(s) = \frac{1}{|y - \t y|^{1/2} -\frac{1}{2}(s-t)}(y-\t x(s)) = \t x^\prime (s),\qquad s\in[t,\t t),
\]
so \eqref{eq.reg:46} holds.

What is left, is to show that \eqref{eq.reg:47} is satisfied.
We use \eqref{eq.reg:64}, \eqref{eq.reg:65}, \eqref{eq.reg:17}, \eqref{rho}, \eqref{eq.reg:66} and the fact that $y\in D$, to deduce
\begin{equation}
\begin{aligned}
|\t\alpha(s)|^2 &= \frac{1}{\le |y - \t y|^{1/2} -\frac{1}{2}(s-t)\pr ^2}\left |\sqrt{E(\t x (s))}^{-1}(y-\t x(s))\right|^2 \\
&\leq \frac{\sigma^2(s)}{\le |y - \t y|^{1/2} -\frac{1}{2}(s-t)\pr ^2}\left |\sqrt{E(\t x (s))}^{-1}(y-\t y)\right|^2 \\
&\leq \frac{\le |y - \t y|^{1/2} -\frac{1}{2}(s-t)\pr ^2}{|y-\t y|^2}\frac{1}{2}\le (y_1-\t y_1 + y_2 - \t y_2)^2
+ \frac{1+\rho(s)}{\rho(s)}(\t y_1 - \t y_2)^2 \pr\\
&\leq \frac{1}{2}\frac{\le |y - \t y|^{1/2} -\frac{1}{2}(s-t)\pr ^2}{|y-\t y|^2}\le 2|y-\t y|^2 + \frac{1+\sqrt{2}}{\sigma(s)|\t y_1 - \t y_2|}(\t y_1 - \t y_2)^2\pr \\
&\leq \le |y - \t y|^{1/2} -\frac{1}{2}(s-t)\pr ^2 + \frac{(1+\sqrt{2})|\t y_1 - \t y_2|}{2|y - \t y|}\\
&\leq \le |y - \t y|^{1/2} -\frac{1}{2}(s-t)\pr ^2 + \frac{(1+\sqrt{2})\sqrt{2}|y - \t y|}{2|y - \t y|}\\
&= \le |y - \t y|^{1/2} -\frac{1}{2}(s-t)\pr ^2 + \frac{(1+\sqrt{2})}{\sqrt{2}}\leq |y-\t y| + \frac{(1+\sqrt{2})}{\sqrt{2}}.
\end{aligned}
\end{equation}
Here we used the fact that the function $f(s) := \le |y - \t y|^{1/2} -\frac{1}{2}(s-t)\pr ^2$ , $s\in [t,\t t]$ attains the maximum at $s = t$.
Consequently
\[
\int_t^{\t t}|\t \alpha(s)|^2 \d s\leq (\t t - t) \le  |y-\t y| + \frac{(1+\sqrt{2})}{\sqrt{2}}\pr \leq \le 4  + \sqrt{2} \pr|y-\t y|^{1/2},
\]
where we used \eqref{eq.reg:16} and \eqref{zeta}.
Hence all of the assumptions of Lemma \ref{lem.reg:5} are met.
This finishes the proof if we take into account \eqref{eq.reg:67}.
\end{proof}

\begin{lemma}\label{lem.reg:2}
There is $c_2>0$ such that for all $y$, $\t y\in D$ and $t\in [0,T]$
\[
|v(y,t) - v(\t y,t)| \leq c_2 |y - \t y|.
\]
\end{lemma}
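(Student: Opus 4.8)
The plan is to apply Lemma~\ref{lem.reg:5} with $N=2$, $A(x)=\sqrt{E(x)}$, $h=g$ and $\gamma=1$, identifying $u=v$ as in the proof of Lemma~\ref{lem.reg:1}. The point that makes this case much easier than Lemma~\ref{lem.reg:1} is that the straight segment joining two points of $D$ stays inside $D$, and on $D$ the matrix $\sqrt{E(\cdot)}$ is constant: by \eqref{eq.reg:20}, for $x\in D$ both the diagonal and the off-diagonal entries of $\sqrt{E(x)}$ equal $1/\sqrt2$, so $\sqrt{E(x)}$ sends every vector parallel to $(1,1)$ to $\sqrt2$ times itself. Fix $y,\t y\in D$ and put $\t t:=t+|y-\t y|$ (that is, $a=1$ in \eqref{eq.reg:50}). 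I would take the affine path
\[
\t x(s):=\t y+\frac{s-t}{\t t-t}\,(y-\t y),\qquad s\in[t,\t t],
\]
so that $\t x(t)=\t y$, $\t x(\t t)=y$, $\t x(s)\in D$ for every $s$, and $|y-\t x(s)|=\frac{\t t-s}{\t t-t}\,|y-\t y|\leq|y-\t y|$, which is \eqref{eq.reg:53}.

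For the control: since $y-\t y$ is a difference of two points of $D$, it is parallel to $(1,1)$, hence so is the constant velocity $\t x'(s)=(y-\t y)/|y-\t y|$; therefore
\[
\t\alpha(s):=\frac1{\sqrt2}\,\frac{y-\t y}{|y-\t y|},\qquad s\in[t,\t t],
\]
is a (constant, hence measurable) control with $\sqrt{E(\t x(s))}\,\t\alpha(s)=\sqrt2\cdot\frac1{\sqrt2}\,\t x'(s)=\t x'(s)$, so \eqref{eq.reg:46} holds. Moreover $|\t\alpha(s)|^2=\tfrac12$, hence
\[
\int_t^{\t t}|\t\alpha(s)|^2\d s=(\t t-t)\cdot\tfrac12=\tfrac12|y-\t y|,
\]
which is \eqref{eq.reg:47} with $b=\tfrac12$ and $\gamma=1$. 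All the hypotheses of Lemma~\ref{lem.reg:5} are met; since $\gamma=1$, Remark~\ref{rem.reg:1} yields the claim with the global constant $c_2=\tfrac14+L+\tfrac12(LB)^2$, where $B$ is the constant from \eqref{eq.reg:21}.

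I do not expect a genuine obstacle here; the proof is essentially a one-line specialisation of Lemma~\ref{lem.reg:5}. The only point deserving a remark is that on $D$ the matrix $\sqrt{E(x)}$ is singular (of rank one), but its range is exactly the diagonal line $\mathrm{span}\{(1,1)\}$, which contains every admissible velocity $\t x'(s)$, so the control above is well defined. This is also the structural reason why the Lipschitz exponent $\gamma=1$ is available here, in contrast with the exponent $1/2$ off the diagonal in Lemma~\ref{lem.reg:1}: steering purely along $D$ costs only $O(|y-\t y|)$ in the kinetic term. If one wished, optimising over $a>0$ would sharpen the value of $c_2$, but the choice $a=1$ already suffices.
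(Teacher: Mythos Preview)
Your proof is correct and follows essentially the same route as the paper: both apply Lemma~\ref{lem.reg:5} with $\gamma=1$, $a=1$, the straight segment $\t x(s)$ from $\t y$ to $y$ (which stays on $D$), and a constant control. The only cosmetic difference is that the paper takes $\t\alpha(s)=\mathrm{sgn}(y_1-\t y_1)\,(1,0)^T$, giving $b=1$, whereas you take $\t\alpha(s)$ parallel to $(1,1)$, which is the minimal-norm preimage and yields the slightly better $b=\tfrac12$; either choice works because the range of $\sqrt{E(x)}$ on $D$ is $\mathrm{span}\{(1,1)\}$.
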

\begin{proof}
Again, we intend to make use of Lemma \ref{lem.reg:5}.
We define
\begin{equation}\label{eq.reg:14}
\t t := t + |y -\t y| = t + \sqrt{2}|y_1 - \t y_1|,
\end{equation}
where the last equality follows by assumption that $y$, $\t y\in D$.
We are about to construct a measurable control $\t \alpha:[t,\t t]\to \R^2$ and a solution $\t x:[t,\t t]\to\R^2$ of the problem
\begin{equation}\label{eq.reg:28}
\begin{cases}
\t x^\prime(s) = \sqrt{E(\t x(s))}\t \alpha(s),&s\in[t,\t t],\\
\t x(t) = \t y\quad\text{and}\quad \t x(\t t) = y,
\end{cases}
\end{equation}
such that
\begin{equation}\label{eq.reg:29}
\int_t^{\t t} |\t \alpha(s)|^2 \d s \leq |y- \t y|.
\end{equation}

Let $\t x:[t,\t t]\to \R^2$ be the solution of
\begin{equation}\label{eq.reg:9}
\begin{cases}
\t x ^\prime(s) = \mathrm{sgn} (y_1 - \t y_1)\frac{1}{\sqrt{2}}
\le\begin{matrix}
1\\
1
\end{matrix}\pr,
&s\in[t,\t t],\\
\t x (t) =\t y.
\end{cases}
\end{equation}
Then
\[
\t x (s) = \t y  + (s-t) \mathrm{sgn}(y_1-\t y_1)\frac{1}{\sqrt{2}}\le\begin{matrix}
1\\
1
\end{matrix}\pr,\qquad s\in[t,\t t],
\]
so in particular $\t x(\t t) = y$.
Moreover, we have $\t x_1 (s) = \t x_2(s)$, for $s\in[t,\t t]$.
The last formula yields
\begin{equation}\label{eq.reg:12}
\sqrt{E(\t x(s))} = \frac{1}{\sqrt{2}}
\le \begin{matrix}
1 & 1 \\
1 & 1
\end{matrix}\pr,\qquad s \in [t,\t t],
\end{equation}
see \eqref{eq.reg:20}.
We define $\t \alpha:[t,\t t]\to \R^2$ by
\[
\t \alpha(s) = \mathrm{sgn}(y_1 - \t y_1)
\le\begin{matrix}
1\\
0
\end{matrix}\pr,
\qquad s\in [t,\t t].
\]
Then, since $|\t \alpha(s)| \equiv 1$ and $\t t - t =  |y - \t y|$, we obtain \eqref{eq.reg:29}.
By \eqref{eq.reg:12}, we have
\[
\sqrt{E(\t x(s))}\t \alpha(s)  = \mathrm{sgn}(y_1 - \t y_1)\frac{1}{\sqrt{2}}
\le \begin{matrix}
1\\
1
\end{matrix} \pr  = \t x^\prime(s), \qquad s \in[t,\t t],
\]
what completes the proof of \eqref{eq.reg:28}.
Now, conditions \eqref{eq.reg:50}--\eqref{eq.reg:53} are satisfied.
With Lemma \ref{lem.reg:5} in hand (see also Remark \ref{rem.reg:1}), we draw the desired conclusion.
\end{proof}

To conclude the general case in which $y, \t y$ are placed arbitrarily in $\R^2$, we consider three subsets
\begin{equation}\label{eq.reg:30}
\begin{aligned}
\Omega_1 &:= \left\{ (y_1,y_2)\in \R^2 \mid y_2 - y_1 \geq \frac{1}{2}\right\},\\
\Omega_2 &:= \left\{ (y_1,y_2)\in \R^2 \mid |y_2 - y_1| \leq \frac{1}{2}\right\},\\
\Omega_3 &:= \left\{ (y_1,y_2)\in \R^2 \mid y_2 - y_1 \leq -\frac{1}{2}\right\}.
\end{aligned}
\end{equation}
\begin{lemma}\label{lem.reg:3}
There is $c_3 >0$ such that for all $y$, $\t y \in \Omega_2$ and $t\in [0,T]$
\[
|y - \t y| \leq \frac{1}{2} \implies |v(y,t) - v(\t y, t)| \leq c_3|y-\t y|^{1/2}.
\]
\end{lemma}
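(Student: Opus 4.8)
The plan is to reduce Lemma~\ref{lem.reg:3} to the two lemmas already established: Lemma~\ref{lem.reg:1}, which compares a point of the diagonal $D$ with a nearby point of $\R^2\setminus D$, and Lemma~\ref{lem.reg:2}, the Lipschitz continuity of $v$ along $D$. The reduction is organized by a case distinction according to how $\mathrm{dist}(y,D)$ and $\mathrm{dist}(\t y,D)$ compare with $d:=|y-\t y|$. Throughout write $\eta(z):=z_1-z_2$ for $z=(z_1,z_2)\in\R^2$, so that $\mathrm{dist}(z,D)=|\eta(z)|/\sqrt2$ and $|\eta(z)-\eta(z')|\leq 2|z-z'|$; since $y,\t y\in\Omega_2$ one has $|\eta(y)|,|\eta(\t y)|\leq\tfrac12$. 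We may assume $d>0$.

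\emph{Case 1: $\max\{|\eta(y)|,|\eta(\t y)|\}\geq 4d$.} By the symmetry of the assertion we may assume $|\eta(y)|\geq 4d$. Then $|\eta(\t y)|\geq|\eta(y)|-2d\geq 2d$ and $\eta(y),\eta(\t y)$ have the same sign, so along the affine path $\t x(s):=\t y+\tfrac{s-t}{\t t-t}(y-\t y)$ the affine function $\eta(\t x(\cdot))$ keeps that sign and $|\eta(\t x(s))|\geq 2d$; here $\t t:=t+a\,d^{1/2}$ with a constant $a>0$ to be fixed. Since $\t x(s)\notin D$ the control $\t\alpha(s):=\sqrt{E(\t x(s))}^{-1}\t x'(s)$ is well defined and \eqref{eq.reg:46} holds, while \eqref{eq.reg:53} holds because $|y-\t x(s)|$ decreases linearly to $0$. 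To verify \eqref{eq.reg:47} with $\gamma=\tfrac12$ I would plug $\xi=\t x'(s)$ into \eqref{eq.reg:65}: using $2d\leq|\eta(\t x(s))|\leq\tfrac12$, $|\t x'(s)|=d/(a\,d^{1/2})$ and $|\t x_1'(s)-\t x_2'(s)|=|\eta(y)-\eta(\t y)|/(a\,d^{1/2})\leq 2\,d^{1/2}/a$, together with $d\leq\tfrac12$, one gets $|\t\alpha(s)|^2\leq C/a^2$ with $C$ absolute, hence $\int_t^{\t t}|\t\alpha(s)|^2\,\d s\leq (C/a)\,d^{1/2}$. Choosing $a$ large enough, all hypotheses of Lemma~\ref{lem.reg:5} hold with $\gamma=\tfrac12$, and since $d\leq\tfrac12$ the constant $c$ in \eqref{eq.reg:60} is bounded by an absolute constant; thus $|v(y,t)-v(\t y,t)|\leq c\,d^{1/2}$.

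\emph{Case 2: $|\eta(y)|<4d$ and $|\eta(\t y)|<4d$.} Here I would route through $D$. Let $y^{\ast},\t y^{\ast}\in D$ be the orthogonal projections of $y,\t y$ onto $D$; then $|y-y^{\ast}|=|\eta(y)|/\sqrt2\leq 1$ because $y\in\Omega_2$, and also $|y-y^{\ast}|<4d/\sqrt2$ by the case assumption (and likewise for $\t y$), while $|y^{\ast}-\t y^{\ast}|\leq|y-\t y|=d$ because orthogonal projection onto a line is a contraction. By the triangle inequality
\[
|v(y,t)-v(\t y,t)|\leq |v(y,t)-v(y^{\ast},t)|+|v(y^{\ast},t)-v(\t y^{\ast},t)|+|v(\t y^{\ast},t)-v(\t y,t)|.
\]
If $y\notin D$, then Lemma~\ref{lem.reg:1} applies with $y^{\ast}$ in the role of the diagonal point and $y$ in the role of the off-diagonal point (legitimate since $|y-y^{\ast}|\leq1$), bounding the first term by $c_1|y-y^{\ast}|^{1/2}\leq c_1(4/\sqrt2)^{1/2}\,d^{1/2}$; if $y\in D$ it vanishes. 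The third term is treated identically. By Lemma~\ref{lem.reg:2} the middle term is at most $c_2|y^{\ast}-\t y^{\ast}|\leq c_2 d\leq c_2 d^{1/2}$, using $d\leq1$. Summing, $|v(y,t)-v(\t y,t)|\leq c\,d^{1/2}$ with $c$ depending only on $c_1,c_2$.

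Taking $c_3$ to be the larger of the two constants obtained above completes the proof. The step I expect to be the main obstacle is recognizing why Case~1 is needed at all: a uniform ``detour through $D$'' fails precisely when $y$ and $\t y$ are close to each other but both at distance of order $1$ (not of order $d$) from $D$, since then the detour costs an amount comparable to $\mathrm{dist}(y,D)^{1/2}$, which is not $O(d^{1/2})$; in Case~1 the delicate point is checking that $|\eta(y)|\geq 4d$ together with $y,\t y\in\Omega_2$ forces $|\eta(\t x(s))|\in[2d,\tfrac12]$ along the whole segment, making the singular factor $\tfrac{1+|\eta|}{|\eta|}$ in \eqref{eq.reg:65} of size $O(1/d)$, which is exactly absorbed by the time length $a\,d^{1/2}$.
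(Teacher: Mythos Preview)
Your proof is correct and follows the same overall strategy as the paper: use Lemma~\ref{lem.reg:5} with a straight-line trajectory when both points are safely away from the diagonal, and route through $D$ via Lemma~\ref{lem.reg:1} otherwise. The case decompositions differ, however. The paper splits first according to whether $y,\t y$ lie on the same side of $D$ and, within the same-side case, according to whether $\frac{1}{\sqrt2}\min\{|\eta(y)|,|\eta(\t y)|\}$ is small or large compared to $|y-\t y|$; the opposite-side situation is handled as a separate third case. You split instead on whether $\max\{|\eta(y)|,|\eta(\t y)|\}$ is large or small relative to $d$, which automatically folds the opposite-side situation into your Case~2 (since $\max\geq 4d$ together with $|\eta(y)-\eta(\t y)|\leq 2d$ forces the same sign). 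In the near-diagonal case the paper routes through a single point of $D$ --- either the projection of the closer of $y,\t y$, or the intersection of the segment $[y,\t y]$ with $D$ --- and therefore never invokes Lemma~\ref{lem.reg:2}; you route through both projections $y^{\ast},\t y^{\ast}$ and use Lemma~\ref{lem.reg:2} for the middle step. Both choices work: yours yields a cleaner two-case structure at the price of quoting one extra lemma, while the paper's three-case argument gets by with Lemma~\ref{lem.reg:1} alone. A minor remark on Case~1: the phrase ``choosing $a$ large enough'' is slightly misleading, since every $a>0$ already makes the hypotheses of Lemma~\ref{lem.reg:5} hold; what varies with $a$ is only the size of the constant $c$ in \eqref{eq.reg:60}.
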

\begin{proof}
Let us fix $y$, $\t y\in \Omega_2$ satisfying
\begin{equation}\label{eq.reg:68}
|y-\t y|\leq \frac{1}{2} .
\end{equation}
Without loss of generality we assume that neither $y$ nor $\t y$ is on diagonal $D$, since we have already proved Lemmas \ref{lem.reg:1} and \ref{lem.reg:2}.
We consider several cases.

\begin{center}
\textbf{Case I: both $y$ and $\t y$ are on the same side of the diagonal.}
\end{center}

\noindent The distance $d(y,D)$ from the point $y$ to the diagonal $D$ is given by $d(y,D) = \frac{1}{\sqrt{2}}|y_1-y_2|$.
\begin{center}
\textbf{Subcase I (a): $\frac{1}{\sqrt{2}}\min\{|y_1-y_2|,|\t y_1 - \t y_2|\}\leq |y - \t y|$.}
\end{center}

\noindent Due to the symmetry, without loss of generality we may assume that
\[
\min\{|y_1-y_2|,|\t y_1 - \t y_2|\} = |y_1-y_2|.
\]
Then there exists exactly one $\xi\in D$ (the projection of $y$  onto D) such that $|y-\xi| = d(y,D) =\frac{1}{\sqrt{2}}|y_1-y_2|$.
We use the above and the fact that $y\in \Omega_2$ to obtain
\[
|y - \xi| \leq \frac{1}{2\sqrt{2}} .
\]
Whereas
\[
|\t y- \xi| \leq \underbrace{|\t y - y|}_{\leq 1/2\text{ by assumption}} + \underbrace{|y - \xi|}_{\leq 1/2\sqrt{2}\text{ by the above}}\leq 1.
\]
In view of Lemma \ref{lem.reg:1}
\[
\begin{aligned}
|v(y,t) - v(\t y,t)| &\leq |v(y,t) - v(\xi,t)| + |v(\xi,t)-v(\t y,t)|\leq c_1|y-\xi|^{1/2} + c_1|\xi - \t y|^{1/2}.
\end{aligned}
\]
Recall we supposed that
\[
|y - \xi| = \frac{1}{\sqrt{2}}\min\{|y_1-y_2|,|\t y_1 - \t y_2|\}\leq  |y-\t y|,
\]
so
\[
|\xi - \t y| \leq |\xi - y| + |y-\t y|\leq 2|y-\t y|.
\]
Hence, we deduce
\begin{equation}\label{eq.reg:31}
|v(y,t)- v(\t y,t)| \leq \le c_1 + c_1\sqrt{2}\pr|y-\t y|^{1/2}.
\end{equation}

\begin{center}
\textbf{Subcase I (b): $\frac{1}{\sqrt{2}}\min\{|y_1-y_2|,|\t y_1 - \t y_2|\}> |y - \t y|$.}
\end{center}

\noindent In that case we proceed via Lemma \ref{lem.reg:5}.
We define
\begin{equation}\label{eq.reg:32}
\t t = t + |y -\t y|^{1/2}
\end{equation}
and construct a control $\t \alpha:[t,\t t]\to \R^2$ and a solution $\t x:[t,\t t]\to \R^2$ of the following problem
\begin{equation}\label{eq.reg:33}
\begin{cases}
\t x^\prime(s) = \sqrt{E(\t x(s))}\t \alpha(s),&s\in[t,\t t],\\
\t x(t) = \t y\quad\text{and}\quad \t x(\t t) = y,
\end{cases}
\end{equation}
such that
\begin{equation}\label{eq.reg:34}
\int_t^{\t t} |\t \alpha(s)|^2 \d s \leq \frac{3+\sqrt{2}}{2\sqrt{2}} |y- \t y|^{1/2}.
\end{equation}
Let $\t x$ be the solution of
\[
\begin{cases}
\t x^\prime(s) = \frac{1}{|y-\t y|^{1/2}}(y-\t y),\\
\t x(t) = \t y,
\end{cases}
\]
or, explicitly,
\[
\t x(s) = \t y + \frac{s-t}{|y-\t y|^{1/2}}(y-\t y),\quad s\in[t,\t t].
\]
Obviously, we have $\t x(\t t) = y$.
Since both $y$, $\t y$ are on the same side of the diagonal and
\begin{equation}\label{eq.reg:35}
\t x([t,\t t]) = \mathrm{conv}\{y,\t y\},
\end{equation}
$\sqrt{E(\t x(s))}^{-1}$ is well-defined for every $s\in [t,\t t]$ .
We define
\[
\t \alpha(s) := \frac{1}{|y-\t y|^{1/2}}\sqrt{E(\t x(s))}^{-1}(y-\t y), \quad s\in [t,\t t].
\]
Hence, \eqref{eq.reg:33} is satisfied.
We use \eqref{eq.reg:65} and obtain
\begin{equation}\label{eq.reg:37}
\begin{aligned}
|\t \alpha(s)|^2 &\leq \frac{1}{|y-\t y|}\left |\sqrt{E(\t x(s))}^{-1}(y-\t y)\right|^2 \\
&\leq \frac{1}{|y - \t y|}\frac{1}{2}\le (y_1 - \t y_1 + y_2 - \t y_2)^2 + \frac{1+|\t x_1(s)- \t x_2(s)|}{|\t x_1(s) - \t x_2 (s)|}(y_1 - \t y_1 - (y_2 - \t y_2))^2 \pr\\
&\leq \frac{1}{2}\frac{1}{|y-\t y|}\le 2|y-\t y|^2 + \frac{1+|\t x_1(s)- \t x_2(s)|}{|\t x_1(s) - \t x_2 (s)|}2|y-\t y|^2 \pr\\
&= |y - \t y| +\frac{1+|\t x_1(s)- \t x_2(s)|}{|\t x_1(s) - \t x_2 (s)|}|y - \t y|,
\end{aligned}
\end{equation}
where  we incorporated the inequality $|y_1 - \t y_1| + |y_2 - \t y_2|\leq \sqrt{2}|y - \t y|$.
By \eqref{eq.reg:35} and by the assumption that $y$, $\t y \in \Omega_2$, we have
\[
|\t x_1 (s) - \t x _2(s)| \leq \max\{ |y_1 - y_2|, |\t y_1 - \t y_2|\} \leq \frac{1}{2}.
\]
On the other hand, recalling \eqref{eq.reg:35} and the fact that we deal with the Subcase I (b)
\[
|\t x_1 (s) - \t x _2(s)| \geq \min\{ |y_1 - y_2|, |\t y_1 - \t y_2|\} > \sqrt{2}|y-\t y|.
\]
The above estimates, in view of \eqref{eq.reg:37}, yield
\[
|\t \alpha(s)|^2 \leq |y-\t y| + \frac{3}{2\sqrt{2}}
\]
what together with \eqref{eq.reg:68} and \eqref{eq.reg:32}  implies \eqref{eq.reg:34}.
By Lemma \ref{lem.reg:5} and \eqref{eq.reg:67}, there is some $c>0$ independent of $y$, $\t y $ and $t$ such that
\begin{equation}\label{eq.reg:38}
|v(y,t) - v(\t y,t)|\leq c|y-\t y|^{1/2}.
\end{equation}

Formulas \eqref{eq.reg:31} and \eqref{eq.reg:38} prove the assertion of Lemma \ref{lem.reg:3} in the case $y$ and $\t y$ are on the same side of the diagonal.

\begin{center}
\textbf{Case II: $y$ and $\t y$ are on both sides of the diagonal.}
\end{center}

In this case, there exists $\xi\in \R^2$ and  $\theta \in (0,1)$ such that
\[
\xi \in D \cap \mathrm{conv}\{y,\t y\}\quad\text{and}\quad\xi = (1-\theta) y  + \theta \t y.
\]
Then
\[
\max\{|y - \xi|, |\t  y-\xi|\}\leq |y- \t y|\leq \frac{1}{2},
\]
so we use Lemma \ref{lem.reg:1} and get
\[
\begin{aligned}
|v(y,t) - v(\t y,t)| &\leq |v(y,t) - v(\xi,t)| + |v(\xi,t) - v(\t y,t)|\leq c_1|y-\xi|^{1/2} + c_1|\xi - \t y|^{1/2}\\
&\leq 2c_1|y-\t y|^{1/2}.
\end{aligned}
\]
This completes our argument.
\end{proof}
Next, we deal with the case when $y$ and $\t y$ are both in $\Omega_1$ (or in $\Omega_3$).
\begin{lemma}\label{lem.reg:4}
There is $c_4>0$ such that for all $y$, $\t y\in \Omega_1$ (respectively $y$, $\t y\in\Omega_3$) and $t\in [0,T]$
\[
|v(y,t) - v(\t y, t)| \leq c_4|y-\t y|.
\]
\end{lemma}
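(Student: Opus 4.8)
The plan is to apply Lemma \ref{lem.reg:5} with $N=2$, $A(x)=\sqrt{E(x)}$, $h=g$, exponent $\gamma=1$ and $a=1$. The key observation is that $\Omega_1$ is a \emph{convex} half-plane on which $|y_1-y_2|\ge 1/2$, so it stays uniformly away from the diagonal $D$; there $\sqrt{E(x)}$ is invertible and $\sqrt{E(x)}^{-1}$ is uniformly bounded, which is exactly the nondegenerate regime where the classical Lipschitz estimate should hold. The statement for $\Omega_3$ then follows verbatim, since $\Omega_3$ is likewise a convex half-plane with $|y_1-y_2|\ge 1/2$.

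First I would fix $y,\t y\in\Omega_1$ with $y\neq\t y$ (the case $y=\t y$ being trivial) and $t\in[0,T]$, set $\t t:=t+|y-\t y|$, and take $\t x$ to be the constant-speed parametrization of the segment from $\t y$ to $y$, namely $\t x(s):=\t y+\frac{s-t}{|y-\t y|}(y-\t y)$ for $s\in[t,\t t]$. By convexity of $\Omega_1$ the whole trajectory lies in $\Omega_1$, hence $\sqrt{E(\t x(s))}^{-1}$ is well defined for every $s$; setting $\t\alpha(s):=\frac{1}{|y-\t y|}\sqrt{E(\t x(s))}^{-1}(y-\t y)$ gives $\sqrt{E(\t x(s))}\t\alpha(s)=\t x^\prime(s)$, so that \eqref{eq.reg:46} holds with $\t x(t)=\t y$ and $\t x(\t t)=y$. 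Condition \eqref{eq.reg:53} is immediate because $|y-\t x(s)|=\big||y-\t y|-(s-t)\big|\le|y-\t y|$ on $[t,\t t]$, and $\t\alpha$ is continuous (a composition of continuous maps), hence measurable and square integrable.

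The only computation left is the energy bound \eqref{eq.reg:47}, which is where \eqref{eq.reg:65} enters: using $(y_1-\t y_1\pm(y_2-\t y_2))^2\le 2|y-\t y|^2$ together with $|\t x_1(s)-\t x_2(s)|\ge 1/2$ (so that $\frac{1+|\t x_1(s)-\t x_2(s)|}{|\t x_1(s)-\t x_2(s)|}\le 3$, the map $\sigma\mapsto 1+1/\sigma$ being decreasing on $(0,\infty)$), one gets $|\t\alpha(s)|^2\le 4$ and therefore $\int_t^{\t t}|\t\alpha(s)|^2\,\d s\le 4|y-\t y|$. Thus Lemma \ref{lem.reg:5} applies with $b=4$, $a=1$, $\gamma=1$, and Remark \ref{rem.reg:1} guarantees that the resulting constant $c_4=2+L+\tfrac12(LB)^2$ is global. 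I do not expect any genuine obstacle here: the entire content of the lemma is that on the convex half-plane $\Omega_1$ the connecting segment never meets the diagonal, so the degeneracy of $E$ plays no role and one is back in the classical Lipschitz regime — this is by far the mildest of the four regularity lemmas.
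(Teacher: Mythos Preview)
Your proposal is correct and follows essentially the same route as the paper: the same straight-line trajectory $\t x$, the same control $\t\alpha=\sqrt{E(\t x)}^{-1}\t x'$, and the same appeal to Lemma~\ref{lem.reg:5} with $\gamma=1$, $a=1$. The only cosmetic difference is that you bound $|\t\alpha|^2$ via \eqref{eq.reg:65} to get $b=4$, whereas the paper uses the sharper identity \eqref{eq.reg:69} directly to obtain $b=1+\frac{1}{1-e^{-1/2}}$; both are valid and yield a global constant.
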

\begin{proof}
Let $y$, $\t y\in\Omega_1$ (the proof in the case of $\Omega_3$ is analogous).
Once again we are  about to use Lemma \ref{lem.reg:5}.
We define
\begin{equation}\label{eq.reg:45}
\t t := t + |y-\t y|
\end{equation}
and construct $\t x :[t,\t t]\to \R^2$
\[
\t x(s) := \t y + \frac{s-t}{|y-\t y|}(y-\t y),\quad s\in[t,\t t].
\]
Obviously $\t x$ satisfies
\[
\begin{cases}
\t x^\prime(s) = \frac{1}{|y-\t y|}(y-\t y),\\
\t x(t) = \t y\quad\text{and}\quad\t x(\t t) = y.
\end{cases}
\]
Since $\t x ([t,\t t]) \subset \Omega_1$, we have
\begin{equation}\label{eq.reg:41}
|\t x_1(s) - \t x_2(s)| \geq \frac{1}{2}.
\end{equation}
Thus, we define
\[
\t \alpha(s) := \frac{1}{|y-\t y|}\sqrt{E(\t x(s))}^{-1}(y-\t y), \quad s\in [t,\t t],
\]
and see that
\[
\sqrt{E(\t x(s))}\t \alpha(s) = \frac{1}{|y-\t y|}(y-\t y) = \t x^\prime(s), \quad s\in [t,\t t].
\]
So far we have shown that assumptions \eqref{eq.reg:46} and \eqref{eq.reg:53} are met.
We only need to show that \eqref{eq.reg:47} holds.
We use \eqref{eq.reg:69} and \eqref{eq.reg:41} to deduce
\[
\begin{aligned}
|\t \alpha(s)|^2&= \frac{1}{|y-\t y|^2} \left | \sqrt{E(\t x(s))}^{-1}(y-\t y)\right |^2\\
&= \frac{1}{|y-\t y|^2}\frac{1}{2}\le \frac{(y_1 -\t y_1 + y_2  - \t y_2)^2}{1 + e^{-|\t x_1(s) - \t x_2(s)|}}  + \frac{(y_1 - \t y_1 - (y_2 - \t y_2))^2}{1-e^{-|\t x_1(s) - \t x_2(s)|}}\pr\\
&\leq \frac{1}{2}\frac{1}{|y-\t y|^2}\le 2|y-\t y|^2 +  \frac{2 | y- \t y|^2}{1- e^{-1/2}}\pr = 1 + \frac{1}{1-e^{-1/2}}.
\end{aligned}
\]
We recall \eqref{eq.reg:45} and obtain
\[
\int_t^{\t t} |\t \alpha(s)|^2 \d s \leq \le 1 + \frac{1}{1-e^{-1/2}}\pr |y - \t y|.
\]
Now Lemma \ref{lem.reg:5} (see also Remark \ref{rem.reg:1}) proves the assertion.
\end{proof}
Analyzing the proof of Lemma \ref{lem.reg:4}, we conclude the following claim.
\begin{corollary}\label{rozniczkowalnosc}
Fix $\delta>0$ and consider a region $\Omega(\delta)=\{y=(y_1,y_2)\in \R^2:|y_1-y_2|\geq \delta \}$.
Then there exists $C(\delta)>0$ such that, for all $y$, $\t y \in \Omega(\delta)$ and $t\in [0,T]$,
\[
|v(y,t)-v(\t y,t)|\leq C(\delta)|y-\t y|.
\]
\end{corollary}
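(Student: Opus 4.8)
The plan is to rerun the proof of Lemma~\ref{lem.reg:4} essentially word for word, the only genuinely new point being that $\Omega(\delta)$ is disconnected: it is the disjoint union of the two closed convex half-planes $\Omega(\delta)^{\pm}:=\{(y_1,y_2)\in\R^2:\pm(y_1-y_2)\geq\delta\}$. I would therefore split the argument into two cases, according to whether $y$ and $\t y$ lie in the same half-plane or not.

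Suppose first that $y,\t y$ belong to the same half-plane, say $\Omega(\delta)^{+}$. Then, by convexity, the whole segment $\mathrm{conv}\{y,\t y\}$ stays in $\Omega(\delta)^{+}\subset\Omega(\delta)$. I would set $\t t:=t+|y-\t y|$ and use the affine path $\t x(s):=\t y+\tfrac{s-t}{|y-\t y|}(y-\t y)$ together with the control $\t\alpha(s):=\tfrac1{|y-\t y|}\sqrt{E(\t x(s))}^{-1}(y-\t y)$, exactly as in Lemma~\ref{lem.reg:4}; this is admissible since $|\t x_1(s)-\t x_2(s)|\geq\delta>0$ along the path, so $\sqrt{E(\t x(s))}^{-1}$ is well defined. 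Conditions \eqref{eq.reg:46} and \eqref{eq.reg:53} are immediate (for the latter, $y-\t x(s)=(1-\tfrac{s-t}{|y-\t y|})(y-\t y)$), and using the identity \eqref{eq.reg:69} with $|\t x_1(s)-\t x_2(s)|\geq\delta$ in place of \eqref{eq.reg:41} gives $|\t\alpha(s)|^{2}\leq 1+\tfrac1{1-e^{-\delta}}$, hence $\int_t^{\t t}|\t\alpha(s)|^{2}\d s\leq(1+\tfrac1{1-e^{-\delta}})|y-\t y|$, which is \eqref{eq.reg:47} with $\gamma=1$, $a=1$ and $b:=1+\tfrac1{1-e^{-\delta}}$. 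Lemma~\ref{lem.reg:5} together with Remark~\ref{rem.reg:1} then yields $|v(y,t)-v(\t y,t)|\leq\big(\tfrac12 b+L+\tfrac12(LB)^{2}\big)|y-\t y|$, a bound independent of $t$ and of the distance $|y-\t y|$.

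Now suppose $y\in\Omega(\delta)^{+}$ and $\t y\in\Omega(\delta)^{-}$ (or vice versa). Here the segment crosses the diagonal and the construction above breaks down, so I would instead exploit the uniform separation of the two half-planes: since $(y_1-\t y_1)-(y_2-\t y_2)=(y_1-y_2)-(\t y_1-\t y_2)\geq 2\delta$ and $|(y_1-\t y_1)-(y_2-\t y_2)|\leq\sqrt2\,|y-\t y|$, one gets $|y-\t y|\geq\sqrt2\,\delta$. Since $v$ is bounded, with $\|v\|_\infty\leq\|g\|_\infty$ (take $\alpha\equiv0$ in \eqref{eq:1.4}), this forces $|v(y,t)-v(\t y,t)|\leq2\|v\|_\infty\leq\tfrac{\sqrt2\,\|v\|_\infty}{\delta}\,|y-\t y|$.

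Combining the two cases gives the claim with $C(\delta):=\max\{\tfrac12 b+L+\tfrac12(LB)^{2},\ \sqrt2\,\|v\|_\infty/\delta\}$, $b=1+\tfrac1{1-e^{-\delta}}$. I do not expect a serious obstacle: the whole difficulty of Lemma~\ref{lem.reg:4} has already been absorbed into Lemma~\ref{lem.reg:5}, and the only subtlety specific to $\Omega(\delta)$ — its disconnectedness — is disposed of by the trivial a priori bound on $v$ once one observes that points on opposite sides of the diagonal are automatically at distance at least $\sqrt2\,\delta$.
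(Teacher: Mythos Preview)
Your proof is correct and follows exactly the approach the paper indicates, namely rerunning the proof of Lemma~\ref{lem.reg:4} with the threshold $\tfrac12$ replaced by $\delta$; the paper gives no further details beyond ``analyzing the proof of Lemma~\ref{lem.reg:4}''. Your explicit treatment of the cross-diagonal case via the a~priori bound $\|v\|_\infty\leq\|g\|_\infty$ and the lower bound $|y-\t y|\geq\sqrt{2}\,\delta$ is a welcome addition that the paper leaves implicit, since Lemma~\ref{lem.reg:4} as stated only covers $y,\t y$ on the same side of the diagonal.
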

We are in a position to conclude the H\"{o}lder continuity of a value function $v$ given by \eqref{eq:1.4}.
\begin{theorem}\label{thm.reg:1}
There is $c_5>0$ such that for all $y$, $\t y\in \R^2$ and $t\in [0,T]$
\[
|v(y,t)- v(\t y, t)|\leq c_5 |y- \t y|^{1/2}.
\]
\end{theorem}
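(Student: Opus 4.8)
The plan is to patch together the local estimates of Lemmas \ref{lem.reg:3} and \ref{lem.reg:4} along the cover $\R^2=\Omega_1\cup\Omega_2\cup\Omega_3$, treating pairs of distant points separately by boundedness of $v$.

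First I would dispose of the case $|y-\t y|\geq\frac12$. By Remark \ref{rem:2}(c) the function $v$ is bounded, say $|v|\leq M$ (this also follows directly from \eqref{eq:1.4} by taking the admissible control $\alpha\equiv 0$ and using $\frac12\int|\alpha|^2\geq 0$, which gives $-\|g\|_\infty\leq v\leq\|g\|_\infty$). Hence for $|y-\t y|\geq\frac12$ one has $|v(y,t)-v(\t y,t)|\leq 2M\leq 2\sqrt2\,M\,|y-\t y|^{1/2}$, and it remains to consider $|y-\t y|\leq\frac12$.

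So I would fix $y,\t y\in\R^2$ with $|y-\t y|\leq\frac12$ and $t\in[0,T]$, and split into cases according to the position of $y,\t y$ relative to $\Omega_2$. If $y,\t y\in\Omega_2$, then Lemma \ref{lem.reg:3} applies at once. If neither point lies in $\Omega_2$, then each lies in $\Omega_1$ or in $\Omega_3$, and necessarily in the same one: were $y\in\Omega_1$ and $\t y\in\Omega_3$, we would get $(y_2-\t y_2)-(y_1-\t y_1)\geq 1$, hence $|y-\t y|\geq 1/\sqrt2>\frac12$, a contradiction; so both lie in $\Omega_1$ or both in $\Omega_3$, and Lemma \ref{lem.reg:4} gives $|v(y,t)-v(\t y,t)|\leq c_4|y-\t y|\leq c_4|y-\t y|^{1/2}$. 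The only remaining configuration is that exactly one of the points lies in $\Omega_2$; say $\t y\in\Omega_2$ and $y\in\Omega_1$ (the case $y\in\Omega_3$ being symmetric). Since $\lambda\mapsto\bigl((1-\lambda)y+\lambda\t y\bigr)_2-\bigl((1-\lambda)y+\lambda\t y\bigr)_1$ is affine, equal to $y_2-y_1\geq\frac12$ at $\lambda=0$ and to $\t y_2-\t y_1\leq\frac12$ at $\lambda=1$, the intermediate value theorem produces a point $\xi\in\mathrm{conv}\{y,\t y\}$ with $\xi_2-\xi_1=\frac12$, so $\xi\in\Omega_1\cap\Omega_2$. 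Because $\xi$ lies on the segment, $|y-\xi|\leq|y-\t y|$ and $|\xi-\t y|\leq|y-\t y|\leq\frac12$, so I would apply Lemma \ref{lem.reg:4} to the pair $y,\xi\in\Omega_1$ and Lemma \ref{lem.reg:3} to the pair $\xi,\t y\in\Omega_2$ and add, obtaining $|v(y,t)-v(\t y,t)|\leq c_4|y-\xi|+c_3|\xi-\t y|^{1/2}\leq(c_3+c_4)|y-\t y|^{1/2}$. Setting $c_5:=\max\{2\sqrt2\,M,\,c_3,\,c_4,\,c_3+c_4\}$ would then give the claim for all $y,\t y\in\R^2$.

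I do not anticipate a real obstacle; the issues are purely organizational. One must check that the smallness hypothesis $|y-\t y|\leq\frac12$ required by Lemma \ref{lem.reg:3} is inherited by the sub-pair $\xi,\t y$ — which holds precisely because $\xi$ is chosen on $\mathrm{conv}\{y,\t y\}$ — and that the genuinely awkward configuration, namely $y$ and $\t y$ on opposite sides of the diagonal strip $\Omega_2$, never arises once $|y-\t y|\leq\frac12$, so it need not be bridged at all. (The cases with a point on the diagonal $D$ are subsumed, since $D\subset\Omega_2$.)
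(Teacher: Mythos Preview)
Your proposal is correct and follows essentially the same route as the paper's own proof: first dispose of distant pairs by boundedness of $v$, then for $|y-\t y|<\frac12$ split according to the cover $\Omega_1,\Omega_2,\Omega_3$, apply Lemmas \ref{lem.reg:3} and \ref{lem.reg:4} directly in the pure cases, and bridge the mixed case $y\in\Omega_1$, $\t y\in\Omega_2$ through an intermediate point $\xi\in\Omega_1\cap\Omega_2$ on the segment. Your write-up is in fact slightly more explicit than the paper's in justifying why the configuration $y\in\Omega_1$, $\t y\in\Omega_3$ cannot arise under $|y-\t y|\le\frac12$.
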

\begin{proof}
Fix some $y$, $\t y\in \R^2$.

\begin{center}
\textbf{Case I: $|y-\t y|\geq \frac{1}{2}$.}
\end{center}
By \cite[Lemma 2.3]{CSS}, for all $y\in \R^2$, $t\in[0,T]$, we have $|v(y,t)|\leq \|g\|_{L^\infty}$.
Therefore, we get
\[
\begin{aligned}
|v(y,t) - v(\t y,t)| &\leq 2\|g\|_{L^\infty} = 2\|g\|_{L^\infty} \sqrt{2} \le \frac{1}{2}\pr ^{1/2}\leq 2 \sqrt{2}  \|g\|_{L^\infty} |y-\t y|^{1/2}.
\end{aligned}
\]

\begin{center}
\textbf{Case II: $|y-\t y|< \frac{1}{2}$. }
\end{center}

If $y$, $\t y\in \Omega_1\cup\Omega_3$ the result follows by Lemma \ref{lem.reg:4}.
If $y$, $\t y\in \Omega_2$, then the result follows by Lemma \ref{lem.reg:3}.
What is left is to consider the case $y\in \Omega_1$ and $\t y\in \Omega_2$ (the case $y\in \Omega_3$ and $\t y\in \Omega_2$ is symmetric).
In such a situation, we find $\xi\in \Omega_1\cap \Omega_2$ of the form
\[
\xi = (1-\theta)y + \theta \t y,
\]
for some $\theta\in (0,1)$.
We use the inequalities $|y-\xi|\leq |y-\t y|$, $|\xi - \t y| \leq |y-\t y| <\frac{1}{2}$
and Lemmas \ref{lem.reg:3} and \ref{lem.reg:4} to obtain
\[
\begin{aligned}
|v(y,t)-v(\t y,t)|&\leq |v(y,t) - v(\xi,t)| +|v(\xi,t) - v(\t y,t)| \leq c_4|y-\xi| + c_3 |\xi -\t y|^{1/2}\\
&\leq c_4|y-\t y| + c_3|y-\t y|^{1/2}\leq \left(\frac{1}{\sqrt{2}}c_4+c_3\right)|y-\t y|^{1/2}.
\end{aligned}
\]
Thus the proof of the theorem is completed.
\end{proof}

Finally, a full space-time regularity for the value function $v$ associated with \eqref{eq.terminal} in the dimension $N=2$ can be achieved.
The Corollary \ref{cor.reg:3} below is a consequence of Remark \ref{rem:2}, Theorems  \ref{thm.reg:2}, \ref{thm.reg:1} and Corollary \ref{rozniczkowalnosc}. On the other hand, since regularity results concerning \eqref{eq.terminal} translate into regularity of a solution to \eqref{eq.uniq:0.1}, one infers Theorem \ref{glowne} as a consequence of
Corollary \ref{cor.reg:3}.

\begin{corollary}\label{cor.reg:3}
Let $N=2$ and $g$ be bounded and Lipschitz continuous.
The value function $v$ defined by \eqref{eq:1.4} is a bounded viscosity solution of \eqref{eq.terminal} with the following regularity: there is $C>0$ such that for all $(y,t)$, $(\t y, \t t)\in\R^2\times [0,T]$
\[
|v(y,t) - v(\t y, \t t)|\leq C\le |y-\t y|^{1/2} + |t-\t t|\pr.
\]
Moreover, $v$ is differentiable a.e. in $\R^2\times [0,T]$ and $v$ satisfies \eqref{eq.terminal} at all points of differentiability.
\begin{proof}
The fact that $v$ is a bounded viscosity solution of \eqref{eq.terminal} is shown in Theorem 4.4 of \cite{CSS}.
The regularity issues are consequences of  Theorems \ref{thm.reg:2}, \ref{thm.reg:1} and Corollary \ref{rozniczkowalnosc}. It is standard that at points of the differentiability of $v$, the equation \eqref{eq.terminal} is satisfied, see, e.g., \cite[\S II, Proposition 1.9]{BCD}.
\end{proof}
\end{corollary}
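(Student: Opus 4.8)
The plan is to obtain the statement by assembling the regularity estimates already established and invoking two standard facts. The boundedness and the viscosity-solution property need no new work: by Remark~\ref{rem:2}(c), which records \cite[Theorem 4.4]{CSS} (together with the bound $\|v(\cdot,t)\|_{L^\infty}\leq\|g\|_{L^\infty}$ from \cite[Lemma 2.3]{CSS}), the function $v$ defined by \eqref{eq:1.4} is a bounded viscosity solution of \eqref{eq.terminal} whenever $g$ is bounded and Lipschitz. So the content of the corollary lies in the joint modulus of continuity, the a.e.\ differentiability, and the validity of the equation at differentiability points.

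For the joint modulus of continuity I would first observe that the value function $v$ of \eqref{eq:1.4} satisfies the hypotheses of Theorem~\ref{thm.reg:2} with $A(\cdot)=\sqrt{E(\cdot)}$ — which obeys \eqref{eq.reg:56} by \eqref{eq.reg:21} — and $h=g$; hence, in dimension $N=2$, $|v(y,t)-v(y,\t t)|\leq k|t-\t t|$. In parallel, Theorem~\ref{thm.reg:1} gives $|v(y,t)-v(\t y,t)|\leq c_5|y-\t y|^{1/2}$. Then, for arbitrary $(y,t),(\t y,\t t)\in\R^2\times[0,T]$,
\[
|v(y,t)-v(\t y,\t t)|\leq|v(y,t)-v(\t y,t)|+|v(\t y,t)-v(\t y,\t t)|\leq c_5|y-\t y|^{1/2}+k|t-\t t|,
\]
so the asserted bound holds with $C:=\max\{c_5,k\}$.

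The a.e.\ differentiability is the point where Corollary~\ref{rozniczkowalnosc} enters in an essential way: the global $1/2$-H\"older bound by itself guarantees nothing about differentiability, but away from the diagonal $D$ the regularity upgrades to Lipschitz. Fix $\delta>0$ and set $U_\delta:=\{(y_1,y_2)\in\R^2:|y_1-y_2|>\delta\}\times(0,T)$, which is open. On $\Omega(\delta)$, Corollary~\ref{rozniczkowalnosc} provides spatial Lipschitz continuity with a constant $C(\delta)$ uniform in $t$, while Theorem~\ref{thm.reg:2} provides time-Lipschitz continuity with constant $k$ uniform in $y$; combining them exactly as in the display above shows that $v$ is Lipschitz on $U_\delta$. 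By Rademacher's theorem, $v$ is differentiable at a.e.\ point of $U_\delta$. Since $\bigcup_{n\geq1}U_{1/n}=(\R^2\setminus D)\times(0,T)$, whose complement in $\R^2\times[0,T]$ is $(D\times[0,T])\cup(\R^2\times\{0,T\})$ and thus Lebesgue-null in $\R^3$ (as $D$ is a line in $\R^2$), it follows that $v$ is differentiable a.e.\ in $\R^2\times[0,T]$.

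Finally, at any point $(x_0,t_0)$ where $v$ is (Fr\'echet) differentiable, the standard consistency principle for viscosity solutions — using $(\nabla v,v_t)(x_0,t_0)$ as a test gradient in both the subsolution and the supersolution inequality for \eqref{eq.terminal} — shows that \eqref{eq.terminal} holds there classically; this is \cite[\S II, Proposition 1.9]{BCD}. I expect no genuine difficulty beyond the single conceptual point of the third paragraph: one must recognize that the bare $1/2$-H\"older estimate is too weak to force a.e.\ differentiability, and that it is precisely the local Lipschitz bound off the diagonal (Corollary~\ref{rozniczkowalnosc}) combined with time-Lipschitz continuity — the diagonal being negligible — that rescues the argument.
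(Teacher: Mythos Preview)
Your proof is correct and follows the same approach as the paper's own short proof: cite \cite[Theorem~4.4]{CSS} for the bounded viscosity-solution property, combine Theorems~\ref{thm.reg:2} and~\ref{thm.reg:1} via the triangle inequality for the joint modulus, use Corollary~\ref{rozniczkowalnosc} with Rademacher's theorem off the (null) diagonal for a.e.\ differentiability, and quote \cite[\S II, Proposition~1.9]{BCD} for the classical identity at differentiability points. Your write-up simply makes explicit the role of Corollary~\ref{rozniczkowalnosc} and the Rademacher argument that the paper leaves implicit.
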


\noindent
{\bf Acknowledgement.} Both authors were supported by the National Science Center of Poland grant SONATA BIS 7 number UMO-2017/26/E/ST1/00989.


\begin{thebibliography}{99}
\bibitem{BCD}
\textsc{M. Bardi, I. Capuzzo-Dolcetta,:}
\textit{Optimal control and viscosity solutions of Hamilton-Jacobi-Bellman equations},
Birkh\"{a}user Boston, Inc., Boston, MA, 1997.

\bibitem{BSS}
\textsc{R. Beals, D. Sattinger, J. Szmigielski,}
\textit{Multipeakons and the classical moment problem},
Adv. Math. {\bf 154} (2000), 229--257.

\bibitem{CH}
\textsc{R. Camassa, D. Holm,}
\textit{An integrable shallow water equation with peaked solitons},
Phys. Rev. Lett. {\bf 71} (1993), 1661--1664.

\bibitem{CD}
\textsc{I. Capuzzo Dolcetta,}
\textit{Representations of solutions of Hamilton-Jacobi equations}, Progress in Nonlinear Diff. Equations {\bf 54} (2003), 74-90, Birkh\"{a}user Verlag.

\bibitem{CGKM}
\textsc{T. Cie\'slak, M. Gaczkowski, M. Kubkowski, M. Ma\l{}ogrosz,}
\textit{Multipeakons viewed as geodesics},
Bull. Polish Acad. Sci. {\bf 65} (2017), 153--164.

\bibitem{CSS}
\textsc{T. Cie\'slak, J. Siemianowski, A. \'Swi\k{e}ch}
\textit{Existence of continuous viscosity solutions to a terminal value problem for a Hamilton--Jacobi equation with a degenerate Hamiltonian occurring in two-peakon dynamics}, submitted.

\bibitem{CW}
\textsc{T. Cie\'slak, H. Wakui,}
\textit{Existence of solutions to a one-dimensional Hamilton-Jacobi equation with a degenerate Hamiltonian}, to appear Appl. Math. (Warsaw).

\bibitem{CEL}
\textsc{M.G. Crandall, L.C. Evans, P.-L. Lions, }
\textit{Some properties of viscosity solutions of Hamilton-Jacobi equations}, Trans. Amer. Math. Soc. {\bf 282} (1984), 487--502.

\bibitem{CDL}
\textsc{A. Cutr\`i, F. Da Lio,}
\textit{Comparison and existence results for evolutive non-coercive first-order Hamilton-Jacobi equations},
ESAIM Control Optim. Calc. Var. {\bf 13} (2007), no. 3, 484--502.

\bibitem{Evans}
\textsc{L.C. Evans},
\textit{Partial differential equations},
American Mathematical Society, Providence, RI, 1998.

\bibitem{FPR}
\textsc{H. Frankowska, S. Plaskacz, T. Rze\.zuchowski,}
\textit{Measure viability theorems and the Hamilton-Jacobi-Bellman equation},
J. Differential Equations {\bf 116} (1995), 265--305.

\bibitem{galbraith}
\textsc{G.N. Galbraith,}
\textit{Extended Hamilton-Jacobi characterization of value functions in optimal control},
SIAM J. Control Optim. {\bf 39} (2000), 281--305.

\bibitem{K}
\textsc{W. Kry\'nski,}
\textit{Dissipative prolongations of the mulitpeakon solutions to the Camassa-Holm equation},
J. Differential Equations {\bf 266} (2019), 1832--1850.


\end{thebibliography}
\end{document}